\documentclass[12pt]{article}
\usepackage{amsfonts}
\usepackage{mathrsfs}
\usepackage{epsfig}
\usepackage{amssymb,amsmath,amsthm,amscd}
\usepackage{latexsym}
\usepackage{xcolor,bm}
\usepackage{xcolor,bm}
\usepackage{cite}
\usepackage{color} 
 \theoremstyle{theorem}
 \newtheorem{thm}{Theorem}[section]
 \newtheorem{lemma}{Lemma}[section]
  
  \newtheorem{remark}{Remark}[section]
 \newtheorem{defn}{Definition}[section]
 \newtheorem{ex}{Example}[section]
 
 \newtheorem{cor}{Corollary}[section]

\newcommand{\cA}{{\mathcal A}}
\newcommand{\cB}{{\mathcal B}}
\newcommand{\cC}{{\mathcal C}}

\newcommand{\cE}{{\mathcal E}}

\newcommand{\cH}{{\mathcal H}}
\newcommand{\cK}{{\mathcal K}}

\newcommand{\cM}{{\mathcal M}}
\newcommand{\cN}{{\mathcal N}}

\newcommand{\cS}{{\mathcal S}}

\newcommand{\cU}{{\mathcal U}}
\newcommand{\cV}{{\mathcal V}}

\newcommand{\mB}{{\mbox B}}

\newcommand{\sA}{{\mathscr A}}

\newcommand{\sC}{{\mathscr C}}

\newcommand{\sM}{{\mathscr M}}

\newcommand{\sX}{{\mathscr X}}
\newcommand{\sS}{{\mathscr S}}

\def\R{\mathbb{R}}

\def\1{\mathbb{1}}

\def\bc{\begin{center}}
\def\ec{\end{center}}
\def\be{\begin{equation}}
\def\ee{\end{equation}}
\def\ba{\begin{array}}
\def\ea{\end{array}}
\def\benu{\begin{enumerate}}
\def\eenu{\end{enumerate}}
\def\bt{\begin{thm}}
\def\et{\end{thm}}
\def\bl{\begin{lemma}}
\def\el{\end{lemma}}
\def\bco{\begin{cor}}
\def\eco{\end{cor}}
\def\br{\begin{remark}}
\def\er{\end{remark}}
\def\bd{\begin{defn}}
\def\ed{\end{defn}}
\def\bp{\begin{proposition}}
\def\ep{\end{proposition}}
\def\bo{\begin{proof}}
\def\eo{\end{proof}}
\def\bx{\begin{example}}
\def\ex{\end{example}}

\def\pa{\partial}
\def\a{\alpha}
 \def\de{\delta}

\def\lam{\lambda} \def\Lam{\Lambda}

\def\ve{\varepsilon}
\def\sig{\sigma}\def\Sig{\Sigma}
\def\vp{\varphi}
\def\w{\omega}
\def\gam{\gamma}\def\Gam{\Gamma}\def\e{{\rm e}}

\def\~{\widetilde}
\def\A{\forall}
\def\ol{\overline}
\def\ul{\underline}
\def\Cap{\bigcap}
\def\Cup{\bigcup}
\def\ra{\rightarrow}

\def\stac{\stackrel}
\def\8{\infty}
\def\X{\times}

\def\mb{\mbox}

\def\ss{\subset}
\def\emp{\emptyset}

\def\.{\cdot}
\def\leq{\leqslant}\def\geq{\geqslant}\def\d{{\rm d}}\def\no{\nonumber}

\def\Hs{\hspace{0.8cm}}
\def\hs{\hspace{0.4cm}}
\def\Vs{\vskip10pt}
\def\vs{\vskip5pt}

\def\[{\left[}
\def\]{\right]}
\def\({\left(}
\def\){\right)}

\title{Dynamic Bifurcation of Nonautonomous Evolution Equations: Invariant Manifold Methods
}

\author{
\small{Chunqiu Li}, \footnote{Corresponding author. E-mail: lichunqiu@wzu.edu.cn}
\Hs{Jintao Wang} \footnote{E-mail: wangjt@wzu.edu.cn}  ~
\\\\
\small\it Department of Mathematics, Wenzhou University,\\ \small\it Wenzhou, Zhejiang Province, 325035, P. R. China\\
}
\date{\small\today}

\begin{document}

\maketitle
\begin{abstract}\baselineskip 14pt
In this paper, we use the global invariant manifold and the reduced singular cohomology groups method, which is different from those in the literature, to study the dynamic bifurcation from infinity of the nonautonomous evolution equation in terms of invariant sets by applying the Conley index theory (due to Rybakowski). We first establish a nonautonomous global invariant manifold for the abstract equation, which allows us to reduce the original system to this finite-dimensional manifold. Then, a homotopy between the reduced equation and a product flow is constructed. Finally, by considering the reduced singular cohomology theory of the Conley index, we establish our main theorems on dynamic bifurcations from infinity for this nonautonomous equation. As an example, a nonautonomous parabolic equation on unbounded domains is considered. Some new detailed results on bifurcations from infinity of the parabolic equation under an appropriate Landesman-Lazer type condition are proved, including the existence of a nonautonomous Morse decomposition, and improving the earlier works in the literature.
\\\\
\textbf{Keywords}: Conley index; Nonautonomous invariant manifold; Dynamic bifurcation from infinity; Nonautonomous parabolic equation; 
Landesman-Lazer type condition 
 \\\\
\textbf{MSC2020}: 37B30; 37B55; 58J55; 35B32; 58J35; 35K58
\end{abstract}


\setcounter {equation}{0}
\section{Introduction}

\hskip 6mm It is well-known that the theory of dynamic bifurcation plays an important role in nonlinear
problems, and has aroused much interest in the past decades; see e.g., \cite{APR,Chow,CLR,LWZH,MW,Ras}, etc. There are many significant works on dynamic bifurcations of evolution equations; see e.g., \cite{ACMV,LLZ,LLW,LCW,MW,San,W1,ZL} for autonomous systems and \cite{CKMNO,DNO,DNO1,KR,M} for some scalar nonautonomous equations, and some canonical PDEs (see e.g., \cite{CLR,CLR12,LCLM,LW25}, etc.). However, compared with the autonomous case, the results on nonautonomous evolution equations especially for nonautonomous PDEs are few, as the dynamics of nonautonomous systems is much more complicated. Therefore, it is of great importance to develop some appropriate theories  to investigate the bifurcation of nonautonomous equations.

In this article, we are concerned with the following nonautonomous evolution equation:
\be\label{e1.1}
\frac{\d u}{\d t}+A u=\lam u+f(u)+g(t)
\ee
on a Banach space $X$, where $A$ is a sectorial operator on $X$, $\lam\in\R$ is the bifurcation parameter, and $f$ is a global Lipschitz continuous mapping from $X^\a\X\R$ ($0\leq \a<1$) to $X$ and is sublinear as $\|u\|_\a\ra \8$ uniformly on bounded $\lam$-intervals. We are mainly interested in the dynamic bifurcation from infinity of this nonautonomous equation.

The topic on bifurcations from infinity of evolution equations can be traced back to the earlier work of Rabinowitz \cite{R}, where the author studied the problem of solutions of operator equations. Later, the result was partially extended 
to potential operator equations (see e.g., \cite{SW,T1,DH}, etc.) and elliptic equations (see e.g., \cite{KS,SZ}, etc.). Recently,
\'Cwiszewski and Kryszewski \cite{CK} investigated the bifurcation from infinity of elliptic equations on unbounded domains $\R^N$. By utilizing the Conley index theory and studying the corresponding dynamic equation on unbounded domains:
$$
u_t-\Delta u+V(x)u=\lam u+f(x,u),\hs x\in \mathbb{R}^N,
$$
they showed that if the bifurcation parameter is an eigenvalue of the Hamiltonian, then the bifurcation from infinity of this equation occurs under some Landesman-Lazer type conditions. Later, Li and Wang \cite{LW21}, using the invariant manifold and the Conley index theory, established some results on bifurcations from infinity and multiplicity of solutions of the Schr\"odinger equation. The interested reader is referred to \cite{KS,LW25-2,G,Stu,SZ}, etc. and the references therein, for examples on the bifurcation from infinity for elliptic equations on unbounded domains.

For a nonautonomous system, invariant sets (or bounded full solutions) play an important role in understanding its dynamical behaviors. This is because that the dynamics of a system is usually captured by all the invariant sets. Thus it is essential to study the bifurcation of nonautonomous systems in terms of invariant sets. A very efficient tool to study invariant sets or bounded full solutions is the theory of the Conley index \cite{Con}, which is extended by Rybakowski \cite{Ry} to semiflows in infinite-dimensional spaces. There are many important results on bifurcations of invariant sets established by using the Conley index theory; see e.g., \cite{BS18,FX,LLZ,LW18,Ry,San,W1,W2,ZL}, etc. for autonomous differential equations. However, for the nonautonomous PDE, there are very few woks in this line because of its high complexity. Very recently, Li and Wang \cite{LW25} investigated the dynamic bifurcation of nonautonomous evolution equations on bounded domains, and presented some results on bifurcations from infinity by using the reduced singular cohomology theory of the Conley index. Based on this work, in this article we further use the nonautonomous invariant manifold theory to study the bifurcation from infinity in terms of invariant sets for \eqref{e1.1}. To the best of our knowledge, there are no results on studying the bifurcation from infinity of nonautonomous PDEs on unbounded domains by employing the nonautonomous invariant manifold.

Compared with the evolution equation on bounded domains discussed in \cite{W2,LW18,LW25}, the operator $A$ in \eqref{e1.1} may not have a compact resolvent. For instance, if $A=-\Delta+v(x)$, $x\in \R^N$, as discussed in \cite{CK,KS}, then the bifurcation problems in this case (on unbounded domains) are more difficult. The underlying reason is that the spectrum of the operator $A$ is not discrete in general, and may be quite complicated. Moreover, the verification of the compactness for the skew-product flow generated by \eqref{e1.1} is usually not an easy task.
To overcome these difficulties, we first establish a nonautonomous global invariant manifold of \eqref{e1.1}, which allows us to reduce \eqref{e1.1} to this finite-dimensional manifold. Then, we apply the Conley index theory (see \cite{Ry}) to study the bifurcation from infinity of this nonautonomous finite-dimensional reduced equation.

As we all know, the Conley index theory can not be directly applied to the nonautonomous system, as the solution operators can not generate a semiflow. Fortunately, one can define a skew-product flow associated with the nonautonomous equation, which is a semiflow in the corresponding product space; see e.g., \cite{J,J1,LSS,Pri,Ward,W92}, etc. However, in the framework of skew-product flows in the product space, it is very difficult to compute the Conley index of invariant sets. To the end, we develop the method in \cite{Ward,W92} to construct a homotopy between the skew-product flow and product flows for the reduced system, and then use some developed topological consequences on the reduced singular cohomology groups (see the Appendix) to discuss the Conley index of invariant sets.
Based on this homotopy and the Conley index of maximal compact invariant sets, we establish our main results of dynamic bifurcations from infinity on invariant sets for \eqref{e1.1} .

As an example, we study the nonautonomous parabolic equation on unbounded domain:

\be\label{e1.3}
u_t-\Delta u+v(x)u=\lam u+f(x,u)+g(x,t),\hs x\in \mathbb{R}^N,
\ee
where $v\in L^\infty(\R^N)$, $N\geq 1$, $\lam\in \R$ is the bifurcation parameter, and $f$ satisfies some Lipschitz condition 
and the following Landesman-Lazer type condition (see also \cite{LSS}):
\be\label{LL}
  \liminf\limits_{s\rightarrow+\infty}f(x,s)\geqslant\ol{f}>0,\Hs
  \limsup\limits_{s\rightarrow-\infty}f(x,s)\leqslant-\underline{f}<0
  \ee uniformly with respect to $x\in \R^N,$ where $\ol f$ and $\ul f$ are independent of $x$.

Firstly, we give a detailed descriptions on the dynamical behaviors of the nonautonomous reduced equation, and present a more deeper level of  discussion on the bifurcation from infinity of this equation near an isolated eigenvalue $\mu$ of the operator $A=-\Delta+v(x)$. Specifically, it will be shown that there exists $\eta>0$ such that if $\lam\in \Lam_1=[\mu-\eta,\mu)$, then the maximal compact invariant set $K_\lam$ of the skew-product flow $\Phi_\lam$ generated by this equation has a Morse decomposition $\sM=\{K_\lam^1,K_\lam^\8\}$ with $K_\lam^1$ being bounded on $\Lam_1$, while
\be\label{e4.29e}
\lim_{\lam\ra\mu}\min_{\~w\in K_\lam^\8}\|w\|=\8,
\ee
where $\~w=(w(t,0;u_0,p),\theta_tp)$ and $K_\lam^\8$ is an attractor for the skew-product flow $\Phi_\lam$. Moreover, we prove that
both of the sets $\sS^1$ and $\sS^\8$ have a component $\Gam$ satisfying $\Gam[\lam]\neq \emp$ for each $\lam\in\Lam_1$, where
$$
  \sS^1=\ol{\Cup_{\lam\in\Lam_1}(K_\lam^1\X\{\lam\})},\hs \sS^\8=\ol{\Cup_{\lam\in\Lam_1}(K_\lam^\8\X\{\lam\})}.
  $$
More interestingly, we further show that for each $\lam\in\Lam_1$, $K_\lam$ also has a Morse decomposition $M=\{\cA_\lam^\8,\cA_\lam^1\}$ in the sense of the work in \cite{ACCL}, established by Aragao-Costa, Caraballo, Carvalho and Langa, with
$$
  \cA_\lam^\8=\Cup_{p\in\cH}\big(A_\lam^\8(p)\X\{p\}\big),\hs \cA_\lam^1=\Cup_{p\in\cH}\big(A_\lam^1(p)\X\{p\}\big)
  $$
and the family of sets $\{A_\lam^\8(p)\}_{p\in \cH}$ being pullback attractors for the reduced system.
Finally, the corresponding results on dynamic bifurcations from infinity of the original equation \eqref{e1.3} are derived.

It is worth mentioning that our approach, using the invariant manifold and the reduced singular cohomology groups to study the dynamic bifurcation from infinity of \eqref{e1.1} is different from those in the literature. Precisely, we first develop the idea from \cite{LW21} (or \cite{LCW}) on autonomous PDEs to establish a nonautonomous global invariant manifold for \eqref{e1.1}. Then we extend some techniques from \cite{Ward,W92} and construct a homotopy for the obtained finite-dimensional nonautonomous reduced equation. Finally, we use some topological consequences on the reduced singular cohomology groups to establish our main results for \eqref{e1.1}, improving and extending some results established in \cite{CK} (see Remark \ref{r6.1}), \cite{LW25,W2,Ward,W92} and other earlier works in the literature. Besides, our invariant manifold method can also be applied to the case that the operator $A$ in \eqref{e1.1} has a compact resolvent (see Section 7 for details). Finally,
if, instead of \eqref{LL}, we assume that
\be\label{1.5}
  \limsup\limits_{s\rightarrow+\infty}f(x,s)\leq-\ol{f}<0,\Hs
  \liminf\limits_{s\rightarrow-\infty}f(x,s)\geq\underline{f}>0
  \ee
uniformly with respect to $x\in\R^N$, then the ``dual" versions of all our results also hold true.

This work is organized as follows. Section 2 is concerned with some
preliminaries. In Section 3 we establish a nonautonomous global invariant manifold of the nonautonomous equation \eqref{e1.1} on $\R^N$, which allows us to reduce \eqref{e1.1} to this finite-dimensional manifold. In Section 4, we first construct a homotopy between the skew-product flow generated by the nonautonomous reduced equation and a product flow, and then establish our main results on the bifurcation from infinity of the reduced system. In Section 5, the dynamic bifurcation from infinity of the nonautonomous parabolic equation \eqref{e1.3} is studied. We give a detailed descriptions on the dynamical behaviors of the corresponding reduced equation, and establish some new results on the bifurcation from infinity of the reduced equation. Section 6 is devoted to presenting the corresponding results on dynamic bifurcations from infinity for the original parabolic equation. In Section 7, we summarize this work and provide a remark. Finally, in the Appendix part, we give some topological consequences on the reduced singular cohomology groups.

\setcounter {equation}{0}
\section{Preliminaries}

\hs \, In this section we first make some preliminaries.
\subsection{Semiflows}
\hs \, For the reader's convenience, we collect some fundamental notions on dynamical systems on metric spaces; see \cite{CLR,Ry} for details.
Let $X$ be a complete metric space.

A nonautonomous dynamical system consists of a ``base flow" on a metric space and a ``cocycle semiflow" on a phase space which is in some sense driven
by the base flow.
\vs
A {\it base flow} $\theta=\{\theta_t\}_{t\in \R}$ is a dynamical system on a metric space $\cH$, i.e., a group of homeomorphisms from $\cH$ to itself so that
\benu
\item[(i)] $\theta_0={\rm id}_{\cH}$;
\item[(ii)] $\theta_t\theta_s=\theta_{t+s}$, for all $t,s\in \R$;
\item[(iii)] the mapping $(t,p)\ra \theta_tp$ is continuous.
\eenu
The metric space $\cH$ is usually called a {\em base space}.
\vs
A {\it cocycle semiflow} $\psi$ on the space $X$ over $\theta$ is a continuous mappings
$\psi:\R_+\X \cH\X X\ra X$ such that
\benu
\item[(i)] $\psi(0,p)={\rm id}_X$ for all $p\in \cH$;
\item[(ii)] $\psi(t+s,p)=\psi(t,\theta_sp)\psi(s,p)$ for all $t,s\in\R_+$ and $p\in \cH$.
\eenu

Given a nonautonomous dynamical system $(\psi,\theta)$ on the space $X\X \cH$. Define a mapping $\Psi$ by
$$
  \Psi(t)(u,p)=(\psi(t,p)u,\theta_tp),\Hs \A(u,p)\in X\X \cH,\hs t\geqslant 0.
  $$
Clearly, $\Psi$ is a global semiflow on $X\X \cH$, which is usually called a {\it skew-product flow}, and the space $X\X \cH$ is called a {\it product space}.
For the sake of convenience, we set $\sX=X\X \cH$ and rewrite $\Psi(t,\~u)$ as $\Psi(t)\~u$ for $\~u\in\sX$.
\vs
We call a continuous mapping $\gam:\R\ra \sX$ a {\it full trajectory} (or {\em full solution}) of $\Psi$ on $\R$, if
$$
  \gam(t)=\Psi(t-s)\gam(s),\hs \forall t,s\in \R,\,\,\, t\geq s.
  $$
The orbit of a full trajectory $\gam$ is defined to be the following set $${\rm orb}(\gam)=\{\gam(t):t\in\R\},$$ which is called {\it a full orbit}.

Let $\cU\subset \sX$.  By $S_\8(\Psi,\cU)$ we denote the union of all bounded full orbits of $\Psi$ in $\cU$. If $\cU=\sX$, set
$$S_\8(\Psi,\sX)=S_\8(\Psi).$$

The {\em $\omega$-limit set} $\w(\gam)$ and {\em $\w^*$-limit set} $\w^*(\gam)$ of a full trajectory $\gam$ are defined, respectively, by
$$\ba{ll}
\w(\gamma)=\{z\in \sX:\,\,\,\,\mb{there is }  t_n\ra \8 \mb{ such that }\gamma(t_n)\ra z\},\ea$$
$$
\ba{ll}
\w^*(\gamma)=\{z\in \sX:\,\,\,\,\mb{there is }  t_n\ra -\8 \mb{ such that }\gamma(t_n)\ra z\}.\ea
$$
A set $\cS\ss \sX$ is said to be {\em invariant}, if $\cS=S_\8(\Psi,\cS)$. A compact invariant set $\cA\ss \sX$ is called an {\em attractor} of  $\Psi$, if it attracts a neighborhood $\cU$ of itself, namely,
$$
\lim_{t\ra\8}\d_H\(\Psi(t)\cU,\cA\)=0.
$$
A compact invariant set $\cS$ of $\Psi$ is said to be {\em isolated}, if
there exists a neighborhood $\cN$ of $\cS$ such that $\cS=S_\8(\Psi,\ol{\cN})$.
Correspondingly, the set $\cN$ is called an {\em isolating neighborhood} of $\cS$.


\begin{defn}\label{defn2.3}(\!\!\cite{Ry})
A set $\cN\subset \sX$ is said to be admissible (w.r.t. $\Psi$), if for every sequences $\~u_n\in
\cN$ and $t_n\rightarrow\infty$ satisfying $\Psi([0,t_n])\~u_n\subset \cN$ for
all $n$, then the sequence $\Psi(t_n)\~u_n$ has a convergent subsequence.

\end{defn}

\subsection{Conley index due to Rybakowski}
\hs For the readers' convenience, we finally recall the definition of Conley index; see
\cite{Con} and \cite{Ry}, etc., for details.

Let $\Psi$ be a local (or global) semiflow on $\sX$. 



Let $\cB$ be a bounded closed subset of $\sX$. $\~u\in \pa \cB$ is called a {\em strict ingress} (resp., {\em strict egress}, {\em bounce-off}) point of $\cB$, if for each trajectory $\gamma:[-\tau,s]\ra \sX$ with $\gamma(0)=\~u$, where $\tau\geq0$, $s>0$, the following conditions are satisfied:
\vs
\noindent(1) there is  $0<\ve<s$ such that
$$
\gamma(t)\in \mb{int}\cB\,\,\,(\mb{resp., }\, \gamma(t)\not\in \cB,\,\,\,\mb{resp., }\,\gamma(t)\not\in \cB),\Hs \A\,t\in (0,\ve);
$$
(2) if $\tau>0$, then there exists $\de \in (0,\tau)$ such that
$$
\gamma(t)\not\in \cB \,\,\,(\mb{resp., }\, \gamma(t)\in \mb{int}\cB,\,\,\,\mb{resp., }\,\gamma(t)\not\in \cB),\Hs \A\,t\in (-\de, 0).
$$
By $\cB^i$ (resp., $\cB^e$, $\cB^b$), we denote the set of all strict ingress (resp., strict egress, bounce-off) points of the closed set $\cB$, and write $\cB^-=\cB^e\cup \cB^b$.

A set $\cB\subset \sX$ is called an {\em isolating block} \cite{Ry} if $\cB^-$ is closed and $
\pa \cB=\cB^i\cup \cB^-.$
\vs
Suppose that $\cN,\cE$ are two closed subsets of $\sX$.  $\cE$ is called an {\em exit
set} of $\cN$, if the following properties hold:
\benu
\item[(1)] $\cE$ is $\cN$-{\em positively invariant}, that is, for each $\~u\in \cE$ and $t\geq 0$,
$$\Psi([0,t])\~u\subset \cN \Longrightarrow\Psi([0,t])\~u\subset \cE;$$
\item[(2)] for each $\~u\in \cN,$ if $\Psi(t_1)\~u\not\in \cN$ for some
$t_1>0$, then there exists $t_0\in[0,t_1]$ so that $\Phi(t_0)\~u\in \cE$.
\eenu

Let $\cS$ be a compact isolated invariant set of $\Psi$. A pair of bounded closed
subsets $(\cN,\cE)$ is called an {\em index pair} of $\cS$, if (1)\, $\cN\setminus \cE$ is an isolating neighborhood of
$\cS;$ (2)\, $\cE$ is an exit set of $\cN$.

We infer from \cite{Ry} that if $\cB$ is a bounded isolating block, then $(\cB,\cB^-)$ is an index pair of the maximal compact invariant set $\cS$  in $\cB$.

\begin{defn}\label{defn2.8}
Let $(\cN,\cE)$ be an index pair of $\cS$. The homotopy Conley index
of $\cS$ is defined to be the homotopy type $[(\cN/\cE,[\cE])]$ of the
pointed space $(\cN/\cE,[\cE])$, denoted by $h(\Psi,\cS).$
\end{defn}

Let $\{\Psi_{\lam}\}_{\lam\in \Lam}$ be a family of semiflows on $\sX$, where $\Lam$ is a metric space. We say that $\Psi_\lam$ {\it depends on $\lam$ continuously}, if $\Psi_\lam(t)\~u$ is defined at the point $(t,\~u,\lam)$, and for every sequence $(t_n,\~u_n,\lam_n)$ with $(t_n,\~u_n,\lam_n)\ra (t,\~u,\lam)$ as $n\ra\8$, then $\Psi_{\lam_n}(t_n)\~u_n$ is defined for all $n$ sufficiently large, and
$$\Psi_{\lam_n}(t_n)\~u_n\rightarrow \Psi_\lam(t)\~u\hs \mb{ as }\,n\rightarrow+\infty.$$

A set $\cN\ss \sX$ is {\it admissible w.r.t. $\Psi_\lam$}, if for every sequences $t_n\in [0,\8)$, $\~u_n\in \sX$ and $\lam_n\in\Lam$ with $t_n\ra \8$, $\lam_n\ra \lam_0$ (in $\Lam$) and $\Psi_{\lam_n}([0,t_n])\~u_n\subset \cN$, the sequence $\Psi_{\lam_n}(t_n)\~u_n$ has a convergent subsequence.
\vs
Let $\mathcal{I}(\sX)$ denote the family of pairs $(\Psi,\cK)$, where $\Psi$ is a semiflow on $\sX$ and $\cK$ is an isolated invariant set of $\Psi$, having an admissible (w.r.t. $\Psi$) isolated neighborhood.

In the following, we collect several important properties of Conley index from \cite{Ry}.
\benu
\item[(P1)] For each $(\Psi,K)\in \mathcal{I}(\sX)$, if $h(\Psi,K)\neq \ol{0}$, then $K\neq \emp$;
\item[(P2)] If $(\Psi,K_1),(\Psi,K_2)\in \mathcal{I}(\sX)$ and $K_1\cap K_2=\emp$, then $(\Psi,K_1\cup K_2)\in \mathcal{I}(\sX)$ and
$$h(\Psi,K_1\cup K_2)=h(\Psi,K_1)\vee h(\Psi,K_2);$$
\item[(P3)] For each $(\Psi_1,K_1)\in \mathcal{I}(\sX_1)$, $(\Psi_2,K_2)\in \mathcal{I}(\sX_2)$, then $(\Psi_1\X \Psi_2,K_1\X K_2)\in \mathcal{I}(\sX_1\X \sX_2)$ and
$$h(\Psi_1\X \Psi_2,K_1\X K_2)=h(\Psi,K_1)\wedge h(\Psi,K_2);$$
\item[(P4)] If the family of semiflows $\Psi_\lam$ depends on $\lam$ continuously and there exists an admissible set $\cN$ (w.r.t. $\Psi_\lam$) such that
$K_\lam=S_\8(\Psi_\lam,\cN)\ss \mb{int} \cN$ for all $\lam\in [0,1]$, then
$$h(\Psi_0,K_0)=h(\Psi_1,K_1)=h(\Psi_\lam,K_\lam),\hs \lam\in [0,1].$$
\eenu

\setcounter {equation}{0}
\section{Global invariant manifolds}

\hs\,\, In this section, we establish a nonautonomous global invariant manifold for the following abstract parabolic equation: 
\be\label{e3.1e}
\frac{\d u}{\d t}+A u=\lam u+f(u)+g(t)
\ee
on a Banach space $X$, where $A$ is a sectorial operator on $X$, $\lam\in\R$ is the bifurcation parameter, $f$ is a global Lipschitz continuous mapping from $X^\a$ ($0\leq \a<1$) to $X$ and is sublinear as $\|u\|_\a\ra \8$ uniformly on bounded $\lam$-intervals, and $g:\R\ra X$ is a bounded and H\"older continuous mapping.


\subsection{Spectrum decomposition}
\hs\,\, Denote $\|\.\|$ and $\|\.\|_\a$ the norms of the Banach spaces $X$ and $X^\a$ ($0\leq \a <1$), respectively.

Let $\sig(A)$ denote the spectrum of $A$ and
$${\rm Re}\sig(A):=\{{\rm Re} z:z\in \sig(A)\}.$$
Assume that the spectrum $\sigma(A)$ has a decomposition $\sig(A)=\sig_1\cup\sig_2\cup\sig_3$ such that
$$
  \sig_1=\sig(A)\cap\{{\rm Re}\lam\leq \beta_1\},\hs \sig_2=\{\mu\},\hs \sig_3=\sig(A)\cap\{{\rm Re}\lam\geq \beta_2\},
$$
where $\mu$ is an isolated eigenvalue of $A$ and $\beta_1,\beta_2$ are two real numbers.
Clearly,
$
  {\rm Re}\,\sig(A)\cap(\beta_1,\beta_2)=\{{\rm Re}\,\mu\}.
$
Moreover, the space $X$ also has a decomposition
$$
  X=X_1\oplus X_2 \oplus X_3,
$$
where $X_2$ is a finite-dimensional subspace of $X$. Let
$$
  P_i: X \ra X_i,\hs i\in\{1,2,3\}
  $$
be the projection from $X$ to $X_i$. Set
$$
  Y_i=X^\a\cap X_i,\Hs i\in\{1,2,3\}.
  $$
Then by the finite dimensionality of $X_2$, one finds that $Y_2$ coincides with $X_2$. Also, it holds that
$$
  X^\a=Y_1\oplus Y_2\oplus Y_3.
  $$

Pick a positive number $\beta$ satisfying
\be\label{beta}
  \beta<\min\{{\rm Re}\mu-\beta_1,\beta_2-{\rm Re}\mu\}.
\ee
Write
\be\label{JA}
  J=({\rm Re}\mu-\frac{1}{4}\beta,{\rm Re}\mu+\frac{1}{4}\beta),\hs A_\lam=A-\lam I, \,\lam\in J,\hs A^i=A_\lam|_{X_i}.
\ee
Thanks to the basic knowledge on sectorial operators (see e.g., \cite[Theorems 1.5.3, 1.5.4]{Hen}), one can immediately conclude that there exists $M>0$ (depending on $A$) such that for $\a\in[0,1)$,
\begin{align}
  &\|\Lam^\a {\e}^{-A^1 t}\|\leq M\e^{\frac{3}{4}\beta t},\hs \|\e^{-A^1 t}\|\leq M\e^{\frac{3}{4}\beta t},\Hs t\leq 0,\label{e3.2}\\
  &\|\Lam^\a \e^{-A^2 t}\|\leq M\e^{\frac{1}{4}\beta |t|},\hs \|\e^{-A^2 t}\|\leq M\e^{\frac{1}{4}\beta |t|},\Hs t\in \R,\label{e3.3}\\
  &\|\Lam^\a \e^{-A^3 t}P_3\Lam^{-\a}\|\leq M\e^{-\frac{3}{4}\beta t},\hs \|\Lam^\a \e^{-A^3 t}\|\leq Mt^{-\a}\e^{-\frac{3}{4}\beta t},\Hs t>0,\label{e3.4}
\end{align}
where $\Lam=A+aI$, and $a$ is a positive number with ${\rm Re} \,\sig(\Lam)>0$. 

\vs

\subsection{Mathematical setting}

\hs \,\, Let $C_b(\R,X)$ denote the set of bounded continuous functions from $\R$ to $X$, which is equipped with the compact-open topology generated by the metric $$\varrho(g_1,g_2)=\sum_{n=1}^\8\frac{1}{2^n}\.\frac{\max_{t\in[-n,n]}|g_1(t)-g_2(t)|}{1+\max_{t\in[-n,n]}|g_1(t)-g_2(t)|}.$$
Then $C_b(\R,X)$ is a complete metric space.
\vs
Assume that $g\in C_b(\R,X)$, where $g(t)=g(\.,t)$. Define the hull of $g$ as follows:
$$\cH:=\cH[g]=\ol{\{g(\tau+\.):\tau\in\R\}}^{C_b(\R,X)}.$$
In the following we always assume that $\cH$ is {\it compact}. The translation group $\theta$ on $\cH$ is defined by
$$
(\theta_tg)(\cdot)=g(t+\cdot),\Hs g\in \cH,\hs t\in \R.
$$
It is easy to see that $\theta=\{\theta_t\}_{t\in\R}$ is a base flow on the space $\cH$.
\vs
Then the equation \eqref{e3.1e} can be written as an abstract cocycle system in $X^\a$:
\be\label{e3.7e}
u_t+Au=\lam u+ f(u)+p(t), \Hs p\in \cH,
\ee
By the classical results (see \cite[Theorem 3.3.3]{Hen}), we conclude that the Cauchy problem of \eqref{e3.7e} is well-posed. Namely, for each $u_0\in X^\a$, $t_0\in \R$ and $p\in \cH$, there exists $T>t_0$, such that \eqref{e3.7e} has a unique solution  $u(t)=u(t,t_0;u_0,p)$ on $[t_0,T)$  with $u(t_0)=u_0$. By the assumptions on $f$ and $g$, we knows that the solution $u(t)$ globally exists for $t\geqslant t_0$, $u_0\in X^\a$, $p\in \cH$.
Set 
$$
 \psi(t,p)u_0=u(t,0;u_0,p),\Hs u_0\in X^\a,\hs p\in \cH,\hs t\geqslant 0.
 $$
Then $\psi$ is a cocycle semiflow on $X^\a$ driven by the translation group $\theta$ on the base space $\cH$.
\vs
Let the $\beta$ be given by \eqref{beta}. Define the space by
\be\label{aa}
  \cV_\beta=\big\{u\in \cC(\mathbb{R};X^\a):\sup\limits_{t\in \mathbb{R}}\e^{-\frac{\beta}{2} |t|}\|u(t)\|_\a<\infty\big\},
\ee
with the norm as follows:
$$
  \|u\|_{\cV_\beta}=\sup\limits_{t\in \mathbb{R}}\e^{-\frac{\beta}{2} |t|}\|u(t)\|_\a<\infty.
$$
Then $\cV_\beta$ is a Banach space.
\vs

\subsection{Existence of global invariant manifolds}
\hs \,\, In this subsection, we establish the existence of nonautonomous global invariant manifolds for \eqref{e3.7e}, which extends the result in \cite[Theorem 4.1]{LW21} to the case of nonautonomous systems. For this purpose, we assume that the following condition on $f$ holds true.
\vs
\noindent($\mathbf{F}$) The Lipschitz constant $L_f$ of $f$ satisfies
$$
   L_f M \int^\infty_0(2+s^{-\a})\e^{-\frac{1}{4}\beta s}\d s<1.
  $$
Let
$$
 Y_{i,j}=Y_i\oplus Y_j,
$$
where $i,j=1,2,3,i\neq j$. Then we can obtain the following conclusion.

\begin{thm}\label{t3.1}
Let the assumption {\rm(}$\mathbf{F}${\rm)} hold. Then for each $\lam\in J$ and each $p\in\cH$, there exists a Lipschitz continuous mapping $$\xi_{\lam,p}:Y_2\ra Y_{13},$$  such that the system \eqref{e3.7e} has a global invariant manifold $\cM_{\lam,p}$, defined by
$$
  \cM_{\lam,p}=\{w+\xi_{\lam,p}(w):w\in Y_2\},\Hs \lam\in J,\,\,p\in\cH.
$$
\end{thm}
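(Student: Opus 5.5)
We construct $\cM_{\lam,p}$ by a Lyapunov--Perron argument in the weighted space $\cV_\beta$ from \eqref{aa}. This is the nonautonomous analogue of \cite[Theorem 4.1]{LW21}.

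\textbf{Step 1: the integral equation.} Fix $\lam\in J$, $p\in\cH$ and $w\in Y_2$. Consider, for $u\in\cV_\beta$, the operator
\begin{align*}
\cK(u)(t)=\e^{-A^2 t}w&+\int_0^t \e^{-A^2(t-s)}P_2F(u(s),s)\,\d s
+\int_{+\infty}^t \e^{-A^1(t-s)}P_1F(u(s),s)\,\d s\\
&+\int_{-\infty}^t \e^{-A^3(t-s)}P_3F(u(s),s)\,\d s,
\end{align*}
where $F(u,s)=f(u)+p(s)$. The point is that a function $u\in\cV_\beta$ is a full solution of \eqref{e3.7e} with $P_2u(0)=w$ if and only if $u=\cK(u)$. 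The "if" direction is differentiation. For "only if", apply the variation of constants formula on each $X_i$ and let the free endpoint tend to $\pm\infty$. The boundary terms vanish because the growth rate $\frac{\beta}{2}$ of $\cV_\beta$ lies strictly between the dichotomy rates $\frac14\beta$ and $\frac34\beta$ in \eqref{e3.2}--\eqref{e3.4}.

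\textbf{Step 2: $\cK$ is a contraction on $\cV_\beta$.} First, $\cK$ maps $\cV_\beta$ into itself. Since $p$ is bounded, the forcing terms contribute at most a constant, and a constant is dominated by $\e^{\frac{\beta}{2}|t|}$ after the integrals are computed. Second, for $u,v\in\cV_\beta$ we estimate $\e^{-\frac{\beta}{2}|t|}\|\cK u(t)-\cK v(t)\|_\a$ term by term, using $\|f(u)-f(v)\|\le L_f\|u-v\|_\a\le L_f\e^{\frac\beta2|s|}\|u-v\|_{\cV_\beta}$ together with \eqref{e3.2}--\eqref{e3.4}.
\begin{itemize}
\item[(a)] The $X_1$ term uses rate $\frac34\beta$ against weight $\frac12\beta$, giving a kernel $\e^{-\frac14\beta|t-s|}$.
\item[(b)] The $X_3$ term gives $(t-s)^{-\a}\e^{-\frac14\beta(t-s)}$.
\item[(c)] The $X_2$ term gives at most $\e^{-\frac14\beta|t-s|}$, after splitting by the signs of $s$ and $t$.
\end{itemize}
Summing, the Lipschitz constant of $\cK$ is bounded by $L_fM\int_0^\infty(2+s^{-\a})\e^{-\frac14\beta s}\,\d s<1$, which is exactly hypothesis $(\mathbf F)$.

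I expect the main obstacle to be this bookkeeping. The weight $\e^{-\frac\beta2|t|}$ is not a one-sided exponential, so the cases $t,s$ of opposite signs must be handled carefully (using $|t|-|s|\ge-|t-s|$). Also, the constant multiplying the $\int_0^\infty(\cdots)$ must come out to exactly $L_fM$, with the three projections' contributions fitting the integrand $(2+s^{-\a})$.

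\textbf{Step 3: the manifold.} By the contraction mapping principle, $\cK$ has a unique fixed point $u^*(\cdot;w,\lam,p)\in\cV_\beta$. The same estimate gives
\[
\|u^*(w)-u^*(w')\|_{\cV_\beta}\le \frac{M\|w-w'\|}{1-\kappa},
\]
where $\kappa<1$ is the contraction constant. We then define
\[
\xi_{\lam,p}(w)=(I-P_2)u^*(0;w,\lam,p)\in Y_{13},
\]
which is Lipschitz in $w$.

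It remains to check invariance. Take $u_0=w+\xi_{\lam,p}(w)$. Then $u^*$ is the full solution through $u_0$ with $u^*(0)=u_0$. For $t\ge0$, the time-shifted function $u^*(t+\cdot)$ lies in $\cV_\beta$ and solves the equation driven by $\theta_tp$. By uniqueness of fixed points, $u^*(t+\cdot)$ is the fixed point for the data $P_2u^*(t)$ and $\theta_tp$. Hence
\[
\psi(t,p)\cM_{\lam,p}\subset\cM_{\lam,\theta_tp}.
\]
Conversely, every point of $\cM_{\lam,\theta_tp}$ is reached in this way, using the backward part of the full solution. This yields the cocycle invariance claimed in Theorem~\ref{t3.1}.
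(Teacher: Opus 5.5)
Your construction is the same as the paper's. Its operator $G_{\lam,p,w}$ is your $\cK$, and the contraction estimate uses the same three kernels and yields exactly the constant in $(\mathbf F)$. The map $\xi_{\lam,p}(w)=(I-P_2)u_w(0)$ and its Lipschitz constant $M(1-L_\beta L_f)^{-1}$ also match. One small slip in your self-map claim: the $X_2$ terms $\e^{-A^2t}w$ and $\int_0^t\e^{-A^2(t-s)}P_2p(s)\,\d s$ are not bounded but grow like $\e^{\frac14\beta|t|}$. The weight $\e^{-\frac\beta2|t|}$ still absorbs this, so nothing breaks.

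Where you genuinely differ is the invariance step. The paper proceeds in three moves:
\begin{itemize}
\item It solves the reduced equation $w_t+A^2w=P_2f(w+\xi_{\lam,p}(w))+P_2p(t)$, with the fiber map frozen at $p$.
\item It asserts ``by construction'' that $w(t)+\xi_{\lam,p}(w(t))$ satisfies \eqref{e3.22}.
\item It then uses the limits $t_0\to\pm\infty$ in \eqref{e3.9e}--\eqref{e3.10e} (essentially your Step 1) to conclude this is a full solution staying in $\cM_{\lam,p}$ for all $t$.
\end{itemize}
You instead shift the fixed point and invoke uniqueness for the data $(P_2u^*(t),\theta_tp)$, which gives $\psi(t,p)\cM_{\lam,p}=\cM_{\lam,\theta_tp}$.

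Your route is shorter, and it is the correct statement for a cocycle. The $(I-P_2)$-component of the trajectory at time $t$ is $\xi_{\lam,\theta_tp}(P_2u^*(t))$, not $\xi_{\lam,p}(P_2u^*(t))$. The paper's ``by construction'' step is not justified in general. Already for $f\equiv0$, the map $\xi_{\lam,p}$ is a constant $c_p$ independent of $w$. Applying $I-P_2$ to the equation shows that $w(t)+c_p$ is a solution only if $A_\lam c_p=(I-P_2)p(t)$ for all $t$, i.e., only if $(I-P_2)p(t)$ is independent of $t$.

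The paper's formulation (invariance within the single fiber $p$) is correct when $\theta_tp=p$, which is the autonomous case. Your version gives the fiberwise invariance that the skew-product reduction actually needs. Note that with it, the reduced vector field on the manifold should involve $\xi_{\lam,\theta_tp}(w)$ rather than $\xi_{\lam,p}(w)$.
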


\begin{proof}

In the proof of this theorem, we develop the technique from \cite{LW21}. Since the system \eqref{e3.7e} is nonautonomous, we slightly modify the proof of \cite[Theorem 3.1]{LW21} and further complement the proof of the invariance of the nonautonomous manifold. For the readers' convenience, we give the proof in details.

Let us consider the Banach space $\cV_{\beta}$ given by \eqref{aa}. Assume that $\lam\in J$. For each fixed $p\in\cH$ and $w\in Y_2$, define a mapping $G:=G_{\lam,p,w}$ on $\cV_{\beta}$ by
\begin{align}\label{G}
G(u)(t)=&\e^{-A^2 t}w+\!\int^t_0\e^{-A^2(t-s)}P_2[f(u(s))+p(s)]{\rm d}s \no \\
&+\!\int^t_{-\infty}\e^{-A^3(t-s)}P_3[f(u(s))+p(s)]{\rm d}s\no \\
 &-\int^\infty_t\e^{-A^1(t-s)}P_1[f(u(s))+p(s)]{\rm d}s.
\end{align}
In what follows we prove that $G$ is a contraction mapping on $\cV_{\beta}$. Firstly, we show $G:\cV_{\beta}\ra \cV_{\beta}$.
Let $u\in \cV_{\beta}.$ Observing that $f$ is globally Lipschitz continuous and that $g$ is bounded, there is a constant $C_1>0$ (depending on $f$ and $g$) such that
$$
  \|f(u)+p(s)\|\leq C_1(\|u\|_\a+2),\Hs \A \,u\in X^\a.
$$
From \eqref{e3.2}-\eqref{e3.4} and \eqref{G}, we easily know that
\begin{align}\label{e3.11}
\|G(u)\|_\a\leq& M\e^{\frac{1}{4}\beta|t|}\|w\|_\a+M\int^{t_2}_{t_1}\e^{\frac{1}{4}\beta|t-s|}C_1(\|u(s)\|_\a+2){\rm d}s\nonumber\\
&+M\int_{-\infty}^t(t-s)^{-\a} \e^{-\frac{3}{4}\beta(t-s)}C_1(\|u(s)\|_\a+2){\rm d}s\nonumber\\
&+M\int^\infty_t\e^{\frac{3}{4}\beta(t-s)}C_1(\|u(s)\|_\a+2){\rm d}s,
\end{align}
where
$$
  t_1:=\min\{t,0\},\hs t_2:=\max\{t,0\}.
  $$
Now let us estimate the terms on the right-hand side of the inequality \eqref{e3.11} in $\cV_{\beta}$ one by one.
For $t\geq 0,$
\begin{align}\label{e3.12}
 &\e^{-\frac{\beta}{2}|t|}M\int^{t_2}_{t_1}\e^{\frac{1}{4}\beta|t-s|}C_1(\|u(s)\|_\a+2){\rm d}s\nonumber \\
 =&\e^{-\frac{\beta}{2}t}M\int^{t}_{0}\e^{\frac{1}{4}\beta(t-s)}C_1(\|u(s)\|_\a+2){\rm d}s\nonumber\\
=&MC_1\int^{t}_{0}\e^{-\frac{1}{4}\beta(t-s)}\e^{{-\frac{\beta}{2}} s}(\|u(s)\|_\a+2){\rm d}s\nonumber\\
\leq& MC_1\int^{t}_{0}\e^{-\frac{1}{4}\beta(t-s)}\(\e^{-\frac{\beta}{2}s}\|u(s)\|_\a+2\){\rm d}s.
\end{align}
Similarly, if $t\leq 0,$
\begin{align}\label{e3.13}
 & \e^{-\frac{\beta}{2}|t|}M\int^{t_2}_{t_1}\e^{\frac{1}{4}\beta|t-s|}C_1(\|u(s)\|_\a+2){\rm d}s\nonumber \\
 =&\e^{\frac{\beta}{2} t}M\int^{0}_{t}\e^{\frac{1}{4}\beta(s-t)}C_1(\|u(s)\|_\a+2){\rm d}s\nonumber\\
=&MC_1\int^{0}_{t}\e^{-\frac{1}{4}\beta(s-t)}\e^{\frac{\beta}{2} s}(\|u(s)\|_\a+2){\rm d}s\nonumber\\
\leq& MC_1\int^{0}_{t}\e^{-\frac{1}{4}\beta(s-t)}\(\e^{\frac{\beta}{2}s}\|u(s)\|_\a+2\){\rm d}s.
\end{align}
Taking \eqref{e3.12} and \eqref{e3.13} into account, it yields that
\begin{align}\label{e3.14}
 &\e^{-\frac{\beta}{2}|t|}M\int^{t_2}_{t_1}\e^{\frac{1}{4}\beta|t-s|}C_1(\|u(s)\|_\a+2){\rm d}s\nonumber\\
\leq& M C_1\int^{t_2}_{t_1}\e^{-\frac{1}{4}\beta|t-s|}\big(\e^{-\frac{\beta}{2}|s|}\|u(s)\|_\a+2\big){\rm d}s\nonumber \\
\leq &M C_1\int^{\8}_{0}\e^{-\frac{1}{4}\beta s}{\rm d}s\cdot(\|u\|_{\cV_{\beta}}+2), \Hs t\in \R.
\end{align}
Since
\begin{align*}
 \e^{-\frac{\beta}{2}|t|}=\e^{-\frac{\beta}{2}|t-s+s|}\leq \e^{-\frac{\beta}{2}|s|}\e^{\frac{\beta}{2}|t-s|},\hs t\in \R,
\end{align*}
we easily deduce that
\begin{align}\label{e3.15}
&\e^{-\frac{\beta}{2}|t|}M\int_{-\infty}^t(t-s)^{-\a} \e^{-\frac{3}{4}\beta(t-s)}C_1(\|u(s)\|_\a+2)\d s\nonumber \\
\leq&
M C_1\int_{-\infty}^t(t-s)^{-\a} \e^{\frac{\beta}{2}|t-s|}\e^{-\frac{3}{4}\beta(t-s)}\big[\e^{-\frac{\beta}{2}|s|}(\|u(s)\|_\a+2)\big]\d s\nonumber \\
\leq&M C_1\int_{-\infty}^t(t-s)^{-\a} \e^{-\frac{1}{4}\beta(t-s)}\(\e^{-\frac{\beta}{2}|s|}\|u(s)\|_\a+2\)\d s\nonumber \\
\leq &M C_1\int^\infty_0s^{-\a}\e^{-\frac{1}{4}\beta s}\d s\cdot(\|u\|_{\cV_{\beta}}+2),
\end{align}
and
\begin{align}\label{e3.16}
&\e^{-\frac{\beta}{2}|t|}M\int^\infty_t\e^{\frac{3}{4}\beta(t-s)}C_1(\|u(s)\|_\a+2)\d s \nonumber \\
\leq &M C_1\int^\infty_t\e^{\frac{\beta}{2}|t-s|}\e^{\frac{3}{4}\beta(t-s)}\e^{-\frac{\beta}{2}|s|}\(\|u(s)\|_\a+2\)\d s \nonumber \\
\leq &M C_1\int^\infty_t\e^{\frac{1}{4}\beta(t-s)}\(\e^{-\frac{\beta}{2}|s|}\|u(s)\|_\a+2\)\d s\nonumber \\
\leq &M C_1\int^\infty_0\e^{-\frac{1}{4}\beta s}{\d} s\cdot(\|u\|_{\cV_{\beta}}+2).
\end{align}
Therefore one immediately concludes from \eqref{e3.11}, \eqref{e3.14}, \eqref{e3.15} and \eqref{e3.16} that
\begin{align*}
\e^{-\frac{\beta}{2}|t|}\|G(u)\|_\a
\leq& M\e^{-\frac{1}{4}\beta|t|}\|w\|_\a+\e^{-\frac{\beta}{2}|t|}M\int^{t_2}_{t_1}\e^{\frac{1}{4}\beta|t-s|}C_1(\|u(s)\|_\a+2){\rm d}s\nonumber\\
&+\e^{-\frac{\beta}{2}|t|}M\int_{-\infty}^t(t-s)^{-\a} \e^{-\frac{3}{4}\beta(t-s)}C_1(\|u(s)\|_\a+2){\rm d}s\nonumber\\
&+\e^{-\frac{\beta}{2}|t|}M\int^\infty_t\e^{\frac{3}{4}\beta(t-s)}C_1(\|u(s)\|_\a+2){\rm d}s\nonumber \\
\leq&M\|w\|_\a+M C_1\int^{\8}_{0}\e^{-\frac{1}{4}\beta s}{\rm d}s\cdot(\|u\|_{\cV_{\beta}}+2)\nonumber\\
&+M C_1\int^\infty_0s^{-\a}\e^{-\frac{1}{4}\beta s}\d s\cdot(\|u\|_{\cV_{\beta}}+2)\nonumber \\
&+M C_1\int^\infty_0\e^{-\frac{1}{4}\beta s}{\d} s\cdot(\|u\|_{\cV_{\beta}}+2)\nonumber \\
\leq& M\|w\|_\a\!+\!MC_1\!\int^\infty_0\!(2\!+\!s^{-\a})\e^{-\frac{1}{4}\beta s}\d s\! \cdot\!\(\|u\|_{\cV_{\beta}}\!+\!2\)<\infty,\,\, \forall t\in \mathbb{R}.
\end{align*}
This implies that $\|Gu\|_{\cV_{\beta}}<\infty,$ and hence $Gu\in \cV_{\beta}$.
\vs
Secondly, let us further verify that $G$ is a contraction mapping on $\cV_{\beta}.$ Assume that $u_1,u_2\in \cV_{\beta}$. Then
\begin{align}\label{e3.17}
\e^{-\frac{\beta}{2}|t|}\|G(u_1)-G(u_2)\|_\a
&\leq \e^{-\frac{\beta}{2}|t|}\|\int^{t}_{0}\e^{-A^2(t-s)}P_2\big(f(u_1)-f(u_2)\big)\d s\|_\a\nonumber\\
&\, +\e^{-\frac{\beta}{2}|t|}\|\int^t_{-\infty}\!\e^{-A^3(t-s)}P_3\big(f(u_1)-f(u_2)\big){\rm d}s\|_\a\nonumber \\
&\, +\e^{-\frac{\beta}{2}|t|}\|\int^\infty_t\!\e^{-A^1(t-s)}P_1\big(f(u_1)-f(u_2)\big)\d s\|_\a.
\end{align}
Using the same argument as the derivation of \eqref{e3.14}, one can easily obtain that
\begin{align}\label{e3.18}
&\e^{-\frac{\beta}{2}|t|}\|\int^{t}_{0}\e^{-A^2(t-s)}P_2\big(f(u_1)-f(u_2)\big)\d s\|_\a\nonumber\\
\leq& \,ML_f\int^{t_2}_{t_1}\e^{-\frac{1}{4}\beta|t-s|}\big(\e^{-\frac{\beta}{2}|s|}\|u_1-u_2\|_\a\big)\d s,
\end{align}
where $L_f$ denotes the Lipschitz constant of $f$. Moreover, we can find that
\begin{align}
&\e^{-\frac{\beta}{2}|t|}\|\int^t_{-\infty}\e^{-A^3(t-s)}P_3\big(f(u_1)-f(u_2)\big){\rm d}s\|_\a\nonumber \\
\leq &\,ML_f\int^t_{-\infty}(t-s)^{-\a}\e^{-\frac{1}{4}\beta(t-s)}\big(\e^{-\frac{\beta}{2}|s|}\|u_1-u_2\|_\a\big)\d s,\label{e3.19}\\
&\e^{-\frac{\beta}{2}|t|}\|\int^\infty_t\e^{-A^1(t-s)}P_1(f(u_1)-f(u_2))\d s\|_\a\nonumber \\
\leq& \, ML_f\int_t^{\infty}\e^{\frac{1}{4}\beta(t-s)}\big(\e^{-\frac{\beta}{2}|s|}\|u_1-u_2\|_\a\big)\d s.\label{e3.20}
\end{align}
Combining \eqref{e3.17}-\eqref{e3.20}, it yields that
\begin{align}\label{e3.21}
\e^{-\frac{\beta}{2}|t|}\|Gu_1-Gu_2\|_\a&\leq ML_f\int^{t_2}_{t_1}\e^{-\frac{1}{4}\beta|t-s|}\big(\e^{-\frac{\beta}{2}|s|}\|u_1-u_2\|_\a\big)\d s\nonumber\\
&+ML_f\int^t_{-\infty}(t-s)^{-\a}\e^{-\frac{1}{4}\beta(t-s)}\!\big(\e^{-\frac{\beta}{2}|s|}\|u_1-u_2\|_\a\big)\d s\nonumber\\
&+ML_f\int_t^{\infty}\e^{\frac{1}{4}\beta(t-s)}\big(\e^{-\frac{\beta}{2}|s|}\|u_1-u_2\|_\a\big)\d s\nonumber\\
&\leq M L_f \int^\infty_0(2+s^{-\a})\e^{-\frac{1}{4}\beta s}\d s\cdot \|u_1-u_2\|_{\cV_{\beta}}.
\end{align}
That is,
$$\|Gu_1-Gu_2\|_{\cV_{\beta}}\leq M L_f \int^\infty_0(2+s^{-\a})\e^{-\frac{1}{4}\beta s}\d s\.\|u_1-u_2\|_{\cV_{\beta}}.$$
By the condition ({\bf F}), one knows that $G$ is a contraction mapping on $\cV_{\beta}$.
According to the Banach contraction mapping principle, we deduce that $G$ has a fixed point $u_w:=u_{\lam,p,w}\in \cV_{\beta},$ which 
satisfies $P_2u_w(0)=w$ and
\begin{align}\label{e3.22}
   u_w(t)=&\e^{-A^2 t}w+\!\int^t_0\e^{-A^2(t-s)}P_2[f(u_w(s))+p(s)]\d s \no \\
   &+\!\int^t_{-\infty}\e^{-A^3(t-s)}P_3[f(u_w(s))+p(s)]\d s\nonumber\\
 &-\int^\infty_t\e^{-A^1(t-s)}P_1[f(u_w(s))+p(s)]\d s.
\end{align}

In the following we construct the invariant manifold mapping $\xi_{\lam,p}$ and show that $\xi_{\lam,p}$ is Lipschitz continuous. For this goal, we set
$$
\Gam(w)=u_w(0),\hs w\in Y_2,
$$
and prove that $\Gam(w)$ is Lipschitz continuous for $w\in Y_2$. Assume that $w_1,w_2\in Y_2$. Then using the same estimation of \eqref{e3.17}, we easily deduce by \eqref{e3.22} that
\begin{align*}
  &\e^{-\frac{\beta}{2}|t|}\|u_{w_1}(t)-u_{w_2}(t)\|_\a\\
  \leq&\, M\e^{-\frac{1}{4}\beta|t|}\|w_1-w_2\|_\a +ML_f\int^{t_2}_{t_1}\e^{-\frac{1}{4}\beta|t-s|}\big(\e^{-\frac{\beta}{2}|s|}\|u_{w_1}-u_{w_2}\|_\a\big) \d s\\
&+ML_f\int^t_{-\infty}(t-s)^{-\a}\e^{-\frac{1}{4}\beta(t-s)}\big(\e^{-\frac{\beta}{2}|s|}\|u_{w_1}-u_{w_2}\|_\a\big)\d s\\
&+ML_f\int_t^{\infty}\e^{\frac{1}{4}\beta(t-s)}\big(\e^{-\frac{\beta}{2}|s|}\|u_{w_1}-u_{w_2}\|_\a\big)\d s\\
\leq&\, M\|w_1-w_2\|_\a+ M L_f\int^\infty_0(2+s^{-\a})\e^{-\frac{1}{4}\beta s}\d s\cdot \|u_{w_1}-u_{w_2}\|_{\cV_{\beta}},\hs t\in \R,
\end{align*}
which implies that
$$\|u_{w_1}-u_{w_2}\|_{\cV_{\beta}}\leq M\|w_1-w_2\|_\a+L_\beta L_f\|u_{w_1}-u_{w_2}\|_{\cV_{\beta}},$$
where $L_\beta=M \int^\infty_0(2+s^{-\a})\e^{-\frac{1}{4}\beta s}\d s$.
Thereby,
\begin{align*}
\|\Gam(w_1)-\Gam(w_2)\|_\a&=\|u_{w_1}(0)-u_{w_2}(0)\|_\a\leq \|u_{w_1}(t)-u_{w_2}(t)\|_{\cV_{\beta}}\\[1ex]
&\leq M(1-L_\beta L_f)^{-1}\|w_1-w_2\|_\a,
\end{align*}
and hence $\Gam(w)$ is Lipschitz continuous in $w$.

Now, we use \eqref{e3.22} to define a mapping $\xi_{\lam,p}:Y_2\ra Y_{13}$ as
\begin{align}\label{e3.23}
\xi_{\lam,p}(w)=&\int^0_{-\infty}\e^{A^3s}P_3[f(u_w(s))+p(s)]\d s \no \\
 &-\int^\infty_0\e^{A^1s}P_1[f(u_w(s))+p(s)]\d s, \hs w\in Y_2.
\end{align}
It is easy to see from \eqref{e3.22} that
\be\label{e3.24}
\Gam(w)=u_w(0)=w+\xi_{\lam,p}(w),\hs w\in Y_2.
\ee
Recalling that $\Gam(w)$ is Lipschitz continuous in $w\in Y_2$, we immediately conclude from \eqref{e3.24} that $\xi_{\lam,p}:Y_2\ra Y_{13}$ is Lipschitz continuous as well.
\vs
Finally, define
$$
  \cM_{\lam,p}=\{w+\xi_{\lam,p}(w):w\in Y_2\},\Hs \lam\in J,\hs p\in\cH.
  $$
Therefore, $\cM_{\lam,p}$ is the desired global invariant manifold for the skew-product flow generated by \eqref{e3.7e}.
\vs
It remains only to show the invariance of $\cM_{\lam,p}$. Let $(w_0,v_0)\in \cM_{\lam,p}$, where $v_0=\xi_{\lam,p}(w_0)$. By the definition, we need to prove that there exists a full solution $u(t)=(w(t),v(t)),t\in \R$ such that
$$(w(t),v(t))\in\cM_{\lam,p}\hs \mb{for} \hs t\in\R.$$
Let $w(t)$ with $w(0)=w_0$ denote the full solution of the following finite-dimensional system:
$$
w_t+A^2w=P_2\(f(w+\xi_{\lam,p}(w)\)+P_2p(t), \Hs p\in \cH,
$$
where $w=P_2u$, $A^2=P_2A_\lam=P_2(A-\lam I)$, $\lam\in J$. Set
$$v(t)=\xi_{\lam,p}(w(t)),\Hs t\in\R.$$
Then one can see that this defines a curve $(w(t),v(t))\in \cM_{\lam,p}$ for $t\in\R$  through the point $(w_0,v_0)$. By the construction of $\xi_{\lam,p}$, we easily find that $u(t)=w(t)+v(t)$ satisfies \eqref{e3.22}. Thus, it suffices to prove that $u(t)$ is a full solution of \eqref{e3.7e}.

Indeed,
assume that $\~u\in \cV_\beta$ is a full solution of the equation:
$$
u_t+Au=\lam u+ f(w+\xi_{\lam,p}(w))+p(t), \Hs p\in \cH,
$$
with $\~u(0)=w_0+v_0$. We rewrite $\~u(t)$ as
\be\label{e3.24e}
  \~u(t)=u^1(t)+u^2(t)+u^3(t),
\ee
where $u^i(t)=P_i\~u(t),i\in\{1,2,3\}.$ Moreover, $u^i(t)$ can be expressed, respectively, as the integral form:
\begin{align}
  &u^2(t)=\e^{-A^2 t}u^2(0)\!+\!\int^t_{0}\e^{-A^2(t-s)}P_2[f(w(s)+\xi_{\lam,p}(w(s)))+p(s)]{\rm d}s,\hs t\in \mathbb{R},\label{e3.8e}\\
  &u^1(t)=\e^{-A^1 (t-t_0)}u^1(t_0)\!+\!\int^t_{t_0}\e^{-A^1(t-s)}P_1[f(w(s)+\xi_{\lam,p}(w(s))+p(s)]{\rm d}s,\,\, t\leq t_0,\label{e3.9e}\\
  &u^3(t)=\e^{-A^3 (t-t_0)}u^3(t_0)\!+\!\int^t_{t_0}\e^{-A^3(t-s)}P_3[f(w(s)+\xi_{\lam,p}(w(s))+p(s)]{\rm d}s,\,\, t\geq t_0.\label{e3.10e}
\end{align}
From \eqref{e3.2}, we infer that for $t\leq t_0$,
\begin{align*}
  \|\e^{-A^1 (t-t_0)}u^1(t_0)\|_\a
\leq& \,M\e^{\frac{3}{4}\beta(t-t_0)}\|\~u(t_0)\|_\a\\
=& \,M\e^{\frac{3}{4}\beta t}\e^{-\frac{1}{4}\beta t_0}\(\e^{-\frac{\beta}{2} t_0}\|\~u(t_0)\|_\a\)\\
\leq &\,M\e^{\frac{3}{4}\beta t}\e^{-\frac{1}{4}\beta t_0}\|\~u\|_{\cV_\beta}\ra 0\hs \mb{as}\hs t_0\ra \infty.
\end{align*}
Similarly, one can find by \eqref{e3.4} that for $t\geq t_0$,
\begin{align*}
  \|\e^{-A^3 (t-t_0)}u^3(t_0)\|_\a
\leq& \,M\e^{-\frac{3}{4}\beta(t-t_0)}\|u(t_0)\|_\a\\
=& M\,\e^{-\frac{3}{4}\beta t}\e^{\frac{1}{4}\beta t_0}(\e^{\frac{\beta}{2} t_0}\|u(t_0)\|_\a)\\
\leq &\,M\e^{-\frac{3}{4}\beta t}\e^{\frac{1}{4}\beta t_0}\|u\|_{\cV_\beta}\ra 0\hs \mb{as}\hs t_0\ra -\infty.
\end{align*}
Passing to the limit as $t_0\ra \infty$ and $t_0\ra -\infty$ in \eqref{e3.9e}, \eqref{e3.10e}, respectively, we can immediately conclude from \eqref{e3.24e}-\eqref{e3.10e} that $\~u(t)$ satisfies \eqref{e3.22} as well. Therefore, we infer from the uniqueness of solutions of \eqref{e3.7e} that $u(t)=\~u(t)$ is indeed a solution of \eqref{e3.7e}.
\end{proof}

\begin{remark}\label{r3.1}
  It is worth mentioning that if $f$ and $g$ are bounded, then one can easily deduce by \eqref{e3.23} that there exists $M_1>0$ (depending on the boundedness of $f$ and $g$) such that
$$
  \|\xi_{\lam,p}(w)\|_\a\leq MM_1\int_0^{\infty}(1+s^{-\a})\e^{-\frac{3}{4}\beta s}\d s, \hs w\in Y_2,
$$
which shows that $\xi_{\lam,p}$ is bounded.
\end{remark}

\setcounter {equation}{0}
\section{Homotopy and bifurcation of the reduced equation}

\hs \,In this section we investigate the dynamic bifurcation from infinity of the reduced system.  By Theorem \ref{t3.1}, we restrict the original equation \eqref{e3.7e} to the global invariant manifold $\cM_{\lam,p}$, which can generate a nonautonomous finite-dimensional system:
\be\label{e4.1e}
w_t+A^2 w=P_2f(w+\xi_{\lam,p}(w))+P_2p(t),\hs w\in Y_2,
\ee
where $w=P_2u$, $A^2=P_2A_\lam=P_2(A-\lam I)$, $\lam\in J=(\mu-\frac{1}{4}\beta,\mu+\frac{1}{4}\beta).$
\vs
Let $\phi_\lam$ denote a cocycle semiflow on $Y_2$ driven by the translation group $\theta$ on the compact base space $\cH$. Define the skew-product flow $\Phi_\lam$ by
$$
  \Phi_\lam(t)(w,p)=(\phi_\lam(t,p)w,\theta_tp),\Hs \A(w,p)\in Y_2\X \cH,\hs t\geqslant 0.
  $$
Then $\Phi_\lam$ is a global semiflow associated with the reduced equation \eqref{e4.1e}.

\subsection{Homotopy and bifurcation}
\hs \, In order to compute the Conley index of the invariant sets for \eqref{e4.1e}, we construct a homotopy between the reduced equation \eqref{e4.1e} and a product flow. To the end, let us consider the following parameterized equation:

\be\label{e4.2e}
w_t+A^2 w=P_2f(w+\xi_{\lam,\nu p}(w))+P_2\nu p(t),\hs w\in Y_2,
\ee
where $\nu\in [0,1]$ is the homotopy parameter. Denote $\Phi_\lam^\nu$ the skew-product flow generated by \eqref{e4.2e} on the product space $Y_2\X \cH$.

\vs
Note that if $\nu=0$, then the equation \eqref{e4.2e} can be transformed into the finite-dimensional autonomous system:
\be\label{e4.3e}
w_t+A^2 w=P_2f(w+\xi_{\lam,0}(w)),\hs w\in Y_2.
\ee
where $\xi_{\lam,0}$ denotes the invariant manifold mapping defined by \eqref{e3.23} with $p=0$. Let $\vp_\lam$ denote the semiflow generated by \eqref{e4.3e} and let
$\Pi_i \,(i=1,2)$ be the projections from $Y_2\X \cH$ to $Y_2$ and $\cH$, respectively, given by
$$\Pi_1(w,p)=w,\hs \Pi_2(w,p)=p,\Hs (w,p)\in Y_2\X \cH.$$
Then we have the following results on invariant sets for the skew-product flow and the product flow.
\bl\label{l4.1}
Assume that there exists a bounded isolating neighborhood $\cN\ss Y_2\X \cH$ of the skew-product flow $\Phi^\nu_\lam$ for all $\nu\in [0,1]$ and write $K_\nu=S_\infty(\Phi^\nu_\lam,\cN)$. Then the following assertions hold.
\benu
\item[{\rm (1)}] $\Phi^0_\lam=\vp_\lam\times \theta$;
\item[{\rm (2)}] If $\cN=\Pi_1\cN\times \Pi_2 \cN$, then $\Pi_i\cN\,(i=1,2)$ is a bounded isolating neighborhood for $\vp_\lam$ and $\theta$, respectively. Moreover, \be\label{3.8}
    K_0=S_\infty(\vp_\lam,\Pi_1\cN)\times S_\infty(\theta,\Pi_2\cN)=\Pi_1K_0\times \Pi_2K_0;
    \ee
\item[{\rm (3)}] $h(\Phi_\lam^\nu,K_\nu)=h(\Phi^0_\lam,K_0)=h(\vp_\lam,\Pi_1K_0)\wedge h(\theta,\Pi_2K_0),\hs \A \nu\in [0,1].$
\eenu
\el
\begin{proof}
(1) By the definition of $\Phi^0_\lam$, we know that
\begin{align*}
\Phi^0_\lam(t)(w_0,p)=\big(w(t,0;w_0,0),\theta_t p\big),\hs (w_0,p)\in Y_2\times \cH.
\end{align*}
Since $w(t,0;w_0,0)$ is a solution of the autonomous equation \eqref{e4.3e}, one has $w(t,0;w_0,0)=\vp_\lam(t)w_0$. Thus
$$\Phi^0_\lam(t)(w_0,p)=(\vp(t)w_0,\theta_t p),$$
which shows the validity of (1).

\vs
(2) Let $(w_0,p)\in S_\infty(\vp_\lam,\Pi_1 \cN)\times S_\infty(\theta,\Pi_2 \cN)$. Then
$$
  \vp_\lam(t)w_0\in \Pi_1 \cN \hs {\rm and} \hs \theta_tp\in \Pi_2 \cN \Hs \mb {for all}\hs t\in \R.
  $$
 From the assertion (1), one infers that
 $$
   \Phi^0_\lam(t)(w_0,p)=(\vp_\lam(t)w_0,\theta_t p)\in \Pi_1\cN\times \Pi_2 \cN=\cN,\hs \A t\in \R.$$
   Hence,
 $$
   S_\infty(\vp_\lam,\Pi_1 \cN)\times S_\infty(\theta,\Pi_2 \cN)\subset K_0.
   $$
Conversely, if $(w_0,p)\in K_0$, then
$$
  (\vp_\lam(t)w_0,\theta_t p)\in \cN \Hs {\rm for \,\,all} \hs t\in \R.
$$
Noticing that $\cN=\Pi_1\cN\times \Pi_2 \cN$, we find that
$$
  \vp_\lam(t)w_0\in \Pi_1\cN \hs {\rm and}\hs  \theta_tp\in \Pi_2\cN\Hs \mb {for all}\hs  t\in \R,
  $$
which shows
$$
  w_0\in S_\infty(\vp_\lam,\Pi_1\cN)\hs \mb{and}\hs p\in S_\infty(\theta, \Pi_2\cN).
  $$
Thus $(w_0,p)\in S_\infty(\vp_\lam,\Pi_1 \cN)\times S_\infty(\theta,\Pi_2 \cN)$. Therefore, the equality \eqref{3.8} holds.
\vs
Note that $\cN=\Pi_1\cN\times \Pi_2 \cN$ is a bounded isolating neighborhood of $K_0=S_\infty(\vp_\lam,\Pi_1 \cN)\times S_\infty(\theta,\Pi_2 \cN)$ for $\Phi_\lam^0$. By the assertion (1), we conclude that $\Pi_i\cN\,(i=1,2)$ is a bounded isolating neighborhood for $\vp_\lam$ and $\theta$, respectively.
\vs
(3) By the continuation property of the Conley index (see (P4) or \cite[Chapter I, Theorem 12.2]{Ry}), we obtain that
$$
  h(\Phi^\nu_\lam,K_\nu)=h(\Phi^0_\lam,K_0),\hs \nu\in [0,1].
  $$
Hence it follows from (P3) (or \cite[Theorem 10.6]{Ry}) that the conclusion (3) holds true.
\end{proof}
\br
In the above lemma, we borrow the technique from \cite{Ward} to construct a homotopy between the nonautonomous reduced system and a product flow. Indeed, Ward \cite{Ward} studied the homotopy for nonautonomous ordinary differential equations by using the Conley index theory \cite{Ry}.
\er

Next, we establish our main results in this subsection on the dynamic bifurcation from infinity of the nonautonomous reduced equation \eqref{e4.1e}. In order to distinguish the Conley index of invatiant sets, we further make the following topological conditions on the base space $\cH$.
\vs
\noindent
({\bf H}) The space $\cH$ is compact and the singular cohomological group $H^k(\cH)$ is {\it free and finitely generated for each $k$}. 
\br\label{r4.1}
In the above condition, we assume that $H^k(\cH)$ is free and finitely generated for each $k$, which describes the topological structure of $\cH$. This topological condition  is very natural, and can be fulfilled in most cases.
For example, 
if $g$ is periodic in $t$, then $\cH\simeq \mathbb{S}^1${\rm (}one-dimensional sphere{\rm )}, ensuring this assumption.
\er
\bd\label{defn2.1}
The equation \eqref{e4.1e} is said to bifurcate from infinity at $\lam=\mu$ {\rm (}or, $(\infty,\mu)$ is a bifurcation point{\rm )}, if there are a sequence $\{\lam_n\}$ with $\lam_n\ra \lam_0$ {\rm(}as $n\ra \8${\rm)} and a sequence of bounded full solutions $w_{\lam_n}=w_{\lam_n}(t,p)$ of \eqref{e4.1e} such that
$$
  \|w_{\lam_n}\|_\infty\ra \8 \hs \mb{as}\hs n\ra\8,
$$
where $\|w_\lam\|_\8:=\sup_{t\in\R}\|w_\lam(t)\|$.
\ed

For each interval $I\subset \R$ and $\cN\subset Y_2\X\cH,$ set
$$\ba{ll}
  \cS(I,\cN)=\ol{\bigcup_{\lam\in I}(S_\infty(\Phi_\lam,\cN)\times \{\lam\})}.\ea
$$
If $\cN=Y_2\X\cH$, we simply write $\cS(I,\cN)=\cS(I)$. Let $\Gam\ss Y_2\X\cH\X \R$ and $\lam\in \R$. For convenience, write $$
\Gam[\lam]=\{\~w:\,\,(\~w,\lam)\in Y_2\X\cH\X \R\}.
$$
$\Gam[\lam]$ is called the {\em $\lam$-section of $\Gam$.}

\bt\label{t4.1}
Assume the conditions {\rm({\bf F})} and {\rm({\bf H})} hold. Let $\mu\in \sig_2$. Then  $(\infty,\mu)$ is a bifurcation point of \eqref{e4.1e}. Specifically,
there exist a sequence $\lam_n$ with $\lam_n\ra \mu$ {\rm($n\ra \8$)} and a sequence of bounded full solutions $w_{n}=w_{\lam_n}(t,0;w_0,p)$ such that
$$
  \|w_{n}\|_\8\ra \8\hs \mb{as}\hs n\ra \8.
  $$
\et
\bo
Let $\mu\in \sig_p$. Consider the parameterized equation \eqref{e4.2e} for $\lam\in J=(\mu-\frac{1}{4}\beta,\mu+\frac{1}{4}\beta)$.
Let us start with the linear equation:
\be\label{e4.5e}
w_t+A^2 w=0,\hs w\in Y_2,
\ee
where $w=P_2u$, $A^2=P_2A_\lam=P_2(A-\lam I)$, $\lam\in J$. Take two numbers $\ve_1$ and $\ve_2$ such that $\ve_1\in(\mu-\frac{1}{4}\beta,\mu)$ and $\ve_2\in (\mu,\mu+\frac{1}{4}\beta)$, respectively. Then if $\lam=\ve_1,\ve_2$, one can trivially see that the trivial solution set $\{0\}$ is an isolated invariant set for the semiflow $\pi_\lam$ generated by \eqref{e4.5e} in $Y_2$. From \cite[Chapter I, Corollary 11.2]{Ry}, we infer that there is a positive
integer $q$ such that
\be\label{e4.6e}
h(\pi_{\ve_1},\{0\})=\Sig^0, \hs h(\pi_{\ve_2},\{0\})=\Sig^q.\ee 

Next we study the parameterized equation \eqref{e4.2e}.
In what follows we use some techniques from \cite{Ry} (see also the proof of Theorem 3.2 in \cite{W2}) to prove that for every fixed $\ve>0$ with $$\ve_1<\mu-\ve<\mu+\ve<\ve_2,$$ there exists $R_\ve>0$ such that if $\lam\in[\ve_1,\mu-\ve]\cup[\mu+\ve,\ve_2]$, $\nu\in[0,1]$ and $w(t)$ is a bounded full solution of \eqref{e4.2e}, then
\be\label{e4.7e}
\|w(t)\|_\infty<R_\ve,
\ee
uniformly with respect to $p\in\cH$.
We verify by contraction and suppose the contrary. Then there would be $\ve_0>0$ and two sequences $\lam_n\in[\ve_1,\mu-\ve_0]\cup[\mu_0+\ve_0,\ve_2]$, $\nu_n\in [0,1]$, and a sequence of bounded full solutions $w_n(t)$ of the equation
\be\label{e3.8e}
w_t+A^2 w=P_2f(w+\xi_{\lam_n,\nu_np}(w))+P_2\nu_np(t),
\ee
such that $$\|w_n\|_\infty\ra \infty\hs \mb{as}\hs n\ra \infty.$$
It can be assumed that $\lam_n\ra \lam_1\in [\ve_1,\mu-\ve_0]\cup[\mu+\ve_0,\ve_2]$.
Set $v_n=w_n/\|w_n\|_\infty.$ Then $\|v_n\|_\infty=1$ for all $n$.
By some quite standard arguments, one can show that there is a subsequence of $v_n$, still denoted by $v_n$, such that
$v_n$ converges uniformly on every compact interval of $\R$ to a function $v=v(t)$. Note that $v_n$ satisfies
\be\label{e4.9e}
\frac{{\rm d}v_n}{{\rm d}t}+A^2v_n=P_2f(w_n+\xi_{\lam_n,\nu_np}(w_n))/\|w_n\|_\infty+P_2\nu_np(t)/\|w_n\|_\infty.
\ee
Passing to the limit in \eqref{e4.9e} and using the boundedness of $f,g$, we immediately conclude that $v(t)$ is a nontrivial bounded full solution of \eqref{e4.5e} for $\lam=\lam_1$. Because for $\lam=\lam_1$, the equation \eqref{e4.5e} does not have any nontrivial bounded full solution other than the trivial one, one can obtain a contradiction.

Set
$$
  \cN=\mb{B}_{Y_2}(R_\ve)\X \cH,
  $$
where $\mb{B}_{Y_2}(R_\ve)$ denotes a ball in $Y_2$ centered at $0$ with radius $R_\ve$. Then by \eqref{e4.7e}, we know that $\cN$ is an admissible isolated neighborhood for the skew-product flow $\Phi_\lam^\nu$ generated by \eqref{e4.2e}. Thanks to the continuation property of Conley index (see (P4)), one obtains that
\be\label{e4.10e}
 h\big(\Phi_\lam,S_\8({\Phi_\lam})\big)= h\big(\Phi_\lam^{1},S_\8({\Phi^{1}_\lam})\big)=h\big(\Phi_\lam^{0},S_\8({\Phi^{0}_\lam})\big)
\ee
for $\lam\in[\ve_1,\mu-\ve]$, and
\be\label{e4.11e}
\ba{ll}
h\big(\Phi_\lam,S_\8({\Phi_\lam})\big)= h\big(\Phi_\lam^{1},S_\8({\Phi^{1}_\lam})\big)=h\big(\Phi_\lam^{0},S_\8({\Phi^{0}_\lam})\big)
\ea\ee
for $\lam\in[\mu+\ve,\ve_2]$. By virtue of Lemma \ref{l4.1}, we deduce that
\begin{align}\label{e4.12e}
  h(\Phi_\lam^{0},S_\infty(\Phi_\lam^{0}))=h(\vp_\lam,S_\infty(\vp_\lam))\wedge h(\theta,\cH)
\end{align}
for $\lam\in [\ve_1,\mu-\ve]$, and
\be\label{e4.13e}
  h(\Phi_\lam^{0},S_\infty(\Phi_\lam^{0}))=h(\vp_\lam,S_\infty(\vp_\lam))\wedge h(\theta,\cH)
\ee
for $\lam\in [\mu+\ve,\ve_2]$.
\vs
Now we distinguish $h(\vp_\lam,S_\infty(\vp_\lam))$ for $\lam\in [\ve_1,\mu-\ve]$ and $\lam\in [\mu+\ve,\ve_2]$, respectively. To the end, we study the equation
\be\label{e4.14e}
w_t+A^2 w=P_2\eta f(w+\xi_{\lam,0}(w)),\hs w\in Y_2,
\ee
where $\eta\in [0,1]$ is the homotopy parameter.
Repeating the same argument below \eqref{e4.7e}, one can easily show that for the fixed $\ve>0$ with $$\ve_1<\mu-\ve<\mu+\ve<\ve_2,$$ there exists $R'_\ve>0$ such that for every $\lam\in[\ve_1,\mu-\ve]\cup[\mu+\ve,\ve_2]$ and $\eta\in[0,1]$, if $w(t)$ is a bounded full solution of \eqref{e4.14e}, then
$$
\|w(t)\|_\infty<R'_\ve.
$$
Thus $\mb{B}_{Y_2}(R'_\ve)$ is an admissible isolated neighborhood for the semiflow $\vp_\lam^\eta$ generated by \eqref{e4.14e}. By the continuation of Conley index (see (P4)) and \eqref{e4.6e} we obtain that
\begin{align}\label{e4.15e}
  h(\vp_\lam,S_\infty(\vp_\lam))=h(\vp_\lam^1,S_\infty(\vp_\lam))
  =h(\vp_{\ve_1}^0,\{0\})=\Sig^0
\end{align}
for $\lam\in [\ve_1,\mu-\ve]$, and
\be\label{e4.16e}
   h(\vp_\lam,S_\infty(\vp_\lam))=h(\vp_\lam^1,S_\infty(\vp_\lam))
  =h(\vp_{\ve_2}^0,\{0\})=\Sig^q
\ee
for $\lam\in [\mu+\ve,\ve_2]$. Therefore, it follows from \eqref{e4.10e}-\eqref{e4.13e} and \eqref{e4.15e}-\eqref{e4.16e} that
\be\label{e4.17e}
 h\big(\Phi_\lam,S_\8({\Phi_\lam})\big)= \Sig^0\wedge h(\theta,\cH),\hs \lam\in [\ve_1,\mu-\ve],
\ee
and
\be\label{e4.18e}
   h\big(\Phi_\lam,S_\8({\Phi_\lam})\big)= \Sig^q\wedge h(\theta,\cH),\hs \lam\in [\mu+\ve,\ve_2].
\ee
Because $H^*(\cH)$ is free and finitely generated, one can easily see that $\~H^*(h(\theta,\cH))$ is free and finitely generated as well.
Applying $\~H^*$ to \eqref{e4.17e} and \eqref{e4.18e} and in view of the Lemma \ref{le4.3} in the Appendix, we conclude that the Conley indices in \eqref{e4.17e} and \eqref{e4.18e} are different.

By virtue of \cite[Theorem 4.5]{LW25}, we deduce that there exists a connected component $\Gam$ (consisting of bounded full solutions) of $\cS([\ve_1,\ve_2])$ for $\Phi_\lam$ meeting $Y_2\X \cH\X \{\ve_1,\ve_2\}$ such that $\Gam$ is unbounded in $Y_2\X \cH\X [\ve_1,\ve_2]$. Thus there are a sequence $\lam_n\in[\ve_1,\ve_2]$ and a sequence of bounded full solution $\~w_n\in \Gam[\lam_n]$ with
\be\label{e4.19e}
 \~w_n(t)=\Phi_{\lam_n}(t)(w_0,p_0)=(w_{\lam_n}(t,0;w_0,p_0),\theta_tp_0)
 \ee
for some $(w_0,p_0)\in Y_2\X \cH$, such that $\~w_n$ is unbounded in $Y_2\X \cH$ as $n\ra \8$.
\vs
On the other hand, we infer from \eqref{e4.7e} that for each fixed $\ve>0$,
$$\Gam[\lam_n]\ss \mb{B}_{Y_2}(R_\ve)\X \cH,\Hs \lam_n\in [\ve_1,\mu-\ve]\cup[\mu+\ve,\ve_2].$$
Therefore, we necessarily have $\lam_n\ra \mu$ as $n\ra \8$. Since $\cH$ is compact, one can conclude from \eqref{e4.19e} that $w_n(t)=w_{\lam_n}(t,0;w_0,p_0)$ is the desired sequence of bounded full solutions fulfilling the requirements in the theorem.
The proof of the theorem is complete.
\eo

\subsection{Bifurcation for abstract evolution equation}

\hs \,\, By virtue of Theorem \ref{t4.1}, we can give the corresponding bifurcation results of the original system \eqref{e3.7e} on the invariant manifold.

Define
$$
u_n(t)=w_n(t)+\xi_{\lam,p}(w_n(t)),
$$
where $w_n(t)$ is the sequence of bounded full solutions of the reduced equation \eqref{e4.1e} given by Theorem \ref{t4.1}.
According to the proof of the invariance for the manifold $\cM_{\lam,p}$ in Theorem \ref{t3.1}, one can easily know that $u_n(t)$ is a sequence of bounded full solutions of the original equation \eqref{e3.7e} contained in $\cM_{\lam,p}$. Moreover, we have the following results.

\bt\label{t4.2}
Assume the conditions {\rm({\bf F})} and {\rm({\bf H})} hold. Let $\mu\in \sig_2$. Then  $(\infty,\mu)$ is a bifurcation point of \eqref{e3.7e}. Specifically,
there exist a sequence $\lam_n$ with $\lam_n\ra \mu$ {\rm($n\ra \8$)} and a sequence of bounded full solutions $u_{n}=u_{\lam_n}(t,0;u_0,p)$ such that
$$
  \|u_{n}\|_\8\ra \8\hs \mb{as}\hs n\ra \8.
  $$
\et

\setcounter {equation}{0}
\section{Bifurcation of nonautonomous parabolic equation on unbounded domains}

\hs \,\, In this section we study the nonautonomous parabolic equation on unbounded domains:
\be\label{e5.1e}
u_t-\Delta u+v(x)u=\lam u+f(x,u)+g(x,t),\hs x\in \mathbb{R}^N,
\ee
where $v\in L^\infty(\R^N)$, $N\geq 1$, $\lam\in \R$ is the bifurcation parameter, and $f$ satisfies some Lipschitz condition and the  Landesman-Lazer type condition \eqref{LL}.

\subsection{Preliminaries}

\hs\,\, Denote $X=L^2(\mathbb{R}^N)$ and $Y=H^1(\mathbb{R}^N) (N\geq 1)$. Let $(\cdot,\cdot)$
and $|\cdot|$ denote the usual inner product and norm on $X$, respectively. The norm $\|\cdot\|$ on $Y$ is defined by
$$
  \|u\|=\big(\int_{\mathbb{R}^N}|\nabla u|^2\mathrm{d}x+\int_{\mathbb{R}^N}v(x)|u|^2\mathrm{d}x\big)^{1/2},\Hs
  u\in Y.
$$

In the following we further make some assumptions on the potential $v$, the nonlinearity $f$  and the function $g$ in \eqref{e5.1e}.
 \benu
\item[($\mathbf{A1}$)] There exist positive numbers $a_0$ and $v_\infty$ such that
$$
a_0\leq v(x),\hs 0<v_\infty:=\sup_{x\in\R^N}v(x)=\lim\limits_{|x|\ra \infty}v(x)<\infty.
$$
\item[($\mathbf{A2}$)] The nonlinear term $f:\R^N\X \R \ra \R$ satisfies
$$
 |f(x,s_1)-f(x,s_2)|\leq l(x)|s_1-s_2|,\Hs \forall s_1,s_2\in \R, \hs x\in \R^N,
$$
where $l(x)=l_1(x)+l_2(x)$, $l_1\in L^\8(\R^N)$, and $l_2$ satisfies
$$l_2\in L^p(\R^N),\hs\mb{$p\geq 2$ if $N=1$,\, $p>2$ if $N=2$, \,$p\geq N$ if $N\geq 3$.}$$
\item[($\mathbf{A3}$)] The function $f$ satisfies the Landesman-Lazer type condition \eqref{LL} in Section 1, and
$$
  |f(x,s)|\leq h(x), \Hs \A s\in\R,\hs x\in \R^N
$$
for some function $h\in L^2(\R^N)\cap L^\8(\R^N)$.
\item[({\bf G})] The function $g$ satisfies that
$$
  -\frac{1}{2}\ol{f}<\inf_{\R^N\X \R}g(x,t)\leqslant \max_{\R^N\X \R}g(x,t)<\frac{1}{2}\ul{f},
  $$
where $\ol{f}$ and $\ul{f}$ are the constants given in \eqref{LL}.
\eenu

By the condition ({\bf A1}), we deduce that the elliptic operator $A=-\Delta+v(x):H^2(\mathbb{R}^N)\ra L^2(\mathbb{R}^N)$ is a sectorial operator, and is selfadjoint and bounded from below. Moreover, one can see from \cite{BS,RS,WX} that the interval $[v_\infty,\infty)$ is the essential spectrum of $A$ and the discrete spectrum of $A$ on $(-\8,v_\8)$ appears. Specifically, for each fixed $a<v_\8$, $\sig(A)\cap (-\8,a)$ consists of at most finitely many eigenvalues of $A$. Thus, the spectrum $\sig(A)$ can be represented as
$$
  \sig(A)=\sig_p\cup\sig_e,
  $$
where $\sig_p$ consists of isolated eigenvalues with finite multiplicity, and $\sig_e=[v_\infty,\infty)$ is the essential spectrum.

Let $\mu\in \sig_p$ be fixed. Then the spectrum $\sigma(A)$ of $A$ can be decomposed as $\sig(A)=\sig_1\cup\sig_2\cup\sig_3$ with
$$
  \sig_1=\sig(A)\cap\{{\rm Re}\lam\leq \beta_1\},\hs \sig_2=\{\mu\},\hs \sig_3=\sig(A)\cap\{{\rm Re}\lam\geq \beta_2\},
$$
for some real numbers $\beta_1,\beta_2$. Clearly,
$
  {\rm Re}\,\sig(A)\cap(\beta_1,\beta_2)=\{{\rm Re}\,\mu\}.
$
Moreover, the space $X$ has a decomposition
$$
  X=X_1\oplus X_2 \oplus X_3,
$$
where both of the spaces $X_1$ and $X_2$ are finite-dimensional subspace of $X$. Let
$$
  P_i: X \ra X_i,\hs i\in\{1,2,3\}
  $$
be the projection from $X$ to $X_i$. Set
$$
  Y_i=Y\cap X_i,\Hs i\in\{1,2,3\}.
  $$
Then by the finite dimensionality of $X_1$ and $X_2$, one finds that $Y_1$ and $Y_2$ coincide with $X_1$ and $X_2$,
respectively. Also, it holds that
$$
  Y=Y_1\oplus Y_2\oplus Y_3.
  $$

Let $\beta$ be the positive number with \eqref{beta} and $J=({\rm Re}\mu-\frac{1}{4}\beta,{\rm Re}\mu+\frac{1}{4}\beta)$. Suppose that $g\in C_b(\R,X)$. Then the equation \eqref{e5.1e} can be written as an abstract cocycle system in $Y$:
\be\label{e5.3e}
u_t+Au=\lam u+ \~f(u)+p(t), \Hs p\in \cH,
\ee
where $\lam\in J$ and $\~f(u)$ denotes the Nemitski operator from $Y$ to $X$ given by
$$
  \~f(u)(x)=f(x,u),\Hs u\in Y.
$$
By ($\mathbf{A3}$), one knows that $\~f$ is well defined. Furthermore, we deduce from ($\mathbf{A2}$) that $\~f$ is global Lipschitz continuous from $Y$ to $X$. Recalling that $A$ is a sectorial operator, we conclude from \cite[Theorem 3.3.3]{Hen} that the Cauchy problem of \eqref{e5.3e} is well-posed. Namely, for each $u_0\in Y$, $t_0\in \R$ and $p\in \cH$, there exists $T>t_0$, such that \eqref{e5.3e} has a unique solution  $u(t)=u(t,t_0;u_0,p)$ on $[t_0,T)$  with $u(t_0)=u_0$. It is trivial to see from ($\mathbf{A3}$) that the solution $u(t)$ globally exists for $t\geqslant t_0$, $u_0\in Y$, $p\in \cH$.
Set
$$
 \psi(t,p)u_0=u(t,0;u_0,p),\Hs u_0\in Y,\hs p\in \cH,\hs t\geqslant 0.
 $$
Then $\psi$ is a cocycle semiflow on $Y$ driven by the translation group $\theta$ on the base space $\cH$.
\vs
In order to ensure the existence of nonautonomous global invariant manifolds for \eqref{e5.3e}, we
assume that the following condition on $f$ holds true.

\noindent($\mathbf{F1}$) The Lipschitz constant $L_f$ of $\~f$ satisfies
$$
   L_f M \int^\infty_0(2+s^{-\frac{1}{2}})\e^{-\frac{1}{4}\beta s}\d s<1.
  $$

Let
$$
 Y_{i,j}=Y_i\oplus Y_j,
$$
where $i,j=1,2,3,i\neq j$. Then according to Theorem \ref{t3.1}, we can obtain that the following conclusion.

\begin{thm}\label{t5.1}
Let the assumptions {\rm(}$\mathbf{A1}${\rm)}-{\rm(}$\mathbf{A3}${\rm)} and {\rm(}$\mathbf{F1}${\rm)} hold. Then for each $\lam\in J$ and each $p\in\cH$, there exists a Lipschitz continuous mapping $$\xi_{\lam,p}:Y_2\ra Y_{13},$$  such that the system \eqref{e5.3e} has a global invariant manifold $\cM_{\lam,p}$, defined by
$$
  \cM_{\lam,p}=\{w+\xi_{\lam,p}(w):w\in Y_2\},\Hs \lam\in J,\,\,p\in\cH.
$$
\end{thm}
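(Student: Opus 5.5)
The plan is to obtain Theorem \ref{t5.1} as a direct application of Theorem \ref{t3.1}. We take $X=L^2(\R^N)$, $\a=\frac12$ and $X^{1/2}=Y=H^1(\R^N)$, so the abstract equation \eqref{e3.7e} becomes \eqref{e5.3e}. It then suffices to check each hypothesis of Theorem \ref{t3.1} in this concrete setting. After that, the contraction argument in the space $\cV_\beta$ goes through verbatim.

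The checks are as follows.
\begin{enumerate}
\item[(i)] By (\textbf{A1}), $A=-\Delta+v(x)$ is selfadjoint, bounded from below and sectorial on $L^2(\R^N)$. Since $a_0\le v$, the graph norm of $(A+aI)^{1/2}$ is equivalent to the $H^1$ norm $\|\cdot\|$ defined via $v$, so indeed $X^{1/2}=Y$.
\item[(ii)] $\mu\in\sig_p$ is an isolated eigenvalue of finite multiplicity lying below $v_\8$. Hence one can choose $\beta_1,\beta_2$ giving the required splitting $\sig_1\cup\{\mu\}\cup\sig_3$. The spectral projections $P_i$ are orthogonal and commute with $A$, and $X_2$ is finite dimensional.
\item[(iii)] The dichotomy estimates \eqref{e3.2}--\eqref{e3.4} hold with $\a=\frac12$ and some constant $M$. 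This follows from the spectral theorem: on $X_1$ and $X_2$ the restrictions of $A_\lam$ are bounded, and on $X_3$ the spectrum of $A_\lam$ lies in $[\beta_2-\lam,\8)$. The standard bound $\|\Lam^{1/2}\e^{-A^3t}\|\le \sup_{s\ge\beta_2}(s+a)^{1/2}\e^{-(s-\lam)t}\le Mt^{-1/2}\e^{-\frac34\beta t}$ then follows from $\lam\in J$ and \eqref{beta}.
\end{enumerate}

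Next comes the nonlinearity. By (\textbf{A3}), $|f(x,u)|\le h(x)$ with $h\in L^2$, so $\~f:Y\ra X$ is well defined and bounded. By (\textbf{A2}), $\|\~f(u_1)-\~f(u_2)\|_{L^2}\le \|l_1\|_\8|u_1-u_2|+\|l_2(u_1-u_2)\|_{L^2}$. The second term is controlled by Hölder's inequality together with the Sobolev embedding $H^1\hookrightarrow L^{2p/(p-2)}$, which is valid exactly under the stated ranges of $p$ (with $p=2$, $N=1$ handled by $H^1\hookrightarrow L^\8$). Thus $\~f$ is globally Lipschitz from $Y$ to $X$ with some constant $L_f$. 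For $g$, we assume $g\in C_b(\R,X)$ is Hölder continuous with compact hull $\cH$, as in Section 3. This gives the cocycle $\psi$ and the bound $\|\~f(u)+p(s)\|\le C_1(\|u\|+2)$ used in \eqref{e3.11}.

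Finally, condition (\textbf{F1}) is exactly (\textbf{F}) with $\a=\frac12$. Theorem \ref{t3.1} therefore applies and yields, for each $\lam\in J$ and $p\in\cH$, the Lipschitz map $\xi_{\lam,p}:Y_2\ra Y_{13}$ given by \eqref{e3.23}. The invariance of $\cM_{\lam,p}$ follows as in that proof. The only step needing genuine care is (iii), namely verifying the smoothing estimate \eqref{e3.4} when the spectrum of $A$ contains essential spectrum. I expect the spectral-calculus computation above to suffice, because $A$ is selfadjoint, so no compactness of the resolvent is needed anywhere.
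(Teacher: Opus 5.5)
Your proposal is correct and follows the paper's route exactly: the paper obtains this theorem simply by invoking Theorem \ref{t3.1} with $X=L^2(\mathbb{R}^N)$, $X^{1/2}=Y=H^1(\mathbb{R}^N)$ and $\alpha=\frac12$, so that (\textbf{F1}) is (\textbf{F}). Your explicit checks supply details the paper leaves implicit: the dichotomy estimates \eqref{e3.2}--\eqref{e3.4} via the spectral theorem for the selfadjoint operator $A$, and the global Lipschitz continuity of the Nemytskii operator via H\"older's inequality and the Sobolev embedding $H^1\hookrightarrow L^{2p/(p-2)}$.
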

Similar to \eqref{e4.1e}, one can restrict the original equation \eqref{e5.3e} on the nonautonomous global invariant manifold $\cM_{\lam,p}$:
\be\label{e5.4e}
w_t+A^2 w=P_2\~f(w+\xi_{\lam,p}(w))+P_2p(t),\hs w\in Y_2,
\ee
where $w=P_2u$, $A^2=P_2A_\lam=P_2(A-\lam I)$, $\lam\in J=(\mu-\frac{1}{4}\beta,\mu+\frac{1}{4}\beta),p\in\cH.$ Let $\phi_\lam$ denote the cocycle semiflow generated by \eqref{e5.4e} and $\Phi_\lam$ denote the corresponding skew-product flow on the product space $Y_2\X \cH$.
\vs
Furthermore, if the function $g$ in \eqref{e5.1e} satisfies 
the condition ({\bf H}) (in Section 4), then we immediately deduce by Theorem \ref{t4.1} that the following results for \eqref{e5.4e} hold.
\bt\label{t5.1}
 Let conditions {\rm({\bf A1})}-{\rm({\bf A3})}, {\rm({\bf F1})} and {\rm({\bf H})} hold. Suppose $\mu\in \sig_p$. Then  $(\infty,\mu)$ is a bifurcation point of \eqref{e5.4e}. Specifically,
there exist a sequence $\lam_n$ with $\lam_n\ra \mu$ {\rm($n\ra \8$)} and a sequence of bounded full solutions $w_{n}=w_{\lam_n}(t,0;w_0,p)$ of \eqref{e5.4e} such that
$$
  \|w_{n}\|_\8\ra \8\hs \mb{as}\hs n\ra \8.
  $$
\et

\subsection{More detailed dynamic bifurcations}


\hs \,\,In this subsection, we make use of the Landesman-Lazer type condition \eqref{LL} and ({\bf G}) to give a more detailed discussion on the dynamic bifurcation from infinity of the reduced system \eqref{e5.4e}. 
\vs
Let $w$ be a function on $\R^N$. Denote by $w_{\pm}$ the positive and negative parts of $w$, respectively. Namely,
$$
  w_{\pm}=\max\{\pm w(x),0\}, \Hs x\in \R^N.
  $$
Then $w=w_+-w_-.$
 \vs

\bl\label{l4.2}(\!\!\cite{LW21}) Let the assumption {\rm($\mathbf{A3}$)} hold true. Then
for each $R>0$ and $\varepsilon>0,$ there exists $s_0>0$ such that if $s\geq s_0$,
$$
  \int_{\mathbb{R}^N}f(x,v+sw)w{\rm d}x\geq
  \frac{1}{2}\int_{\mathbb{R}^N}(\bar{f}w_++\underline{f}w_-){\rm d}x-\varepsilon,
  $$
for all $w\in {\rm\bar{B}}_{Y^2}(1)$ and $v\in {\rm\bar{B}}_X(R)$, where ${\rm B}_X(R)$ denotes a ball in $X$ centered at $0$ with radius $R$.
\el

For each $0\leq a\leq b\leq \infty,$ put
$$
  \Xi[a,b]=\{z\in Y_2, a\leq |z|\leq b\}.
$$

\bl\label{l4.3}
Assume conditions {\rm ($\mathbf{A1}$)-($\mathbf{A3}$)}, {\rm({\bf F1})}, {\rm ({\bf H})} and {\rm({\bf G})} hold. Then there exist positive numbers $R_0,c_0$ such that the following results hold.
\benu
\item[(1)] For each $\lam\in[\mu,\mu+\frac{1}{4}\beta)$, if $w(t)$ is a solution of \eqref{e5.4e} in $\Xi[R_0,\8]$, then
\be\label{e4.20e}
\frac{{\rm d}}{{\rm d}t}|w(t)|^2\geq c_0 |w(t)|.
\ee
\item[(2)] For each $R>R_0$, there exists $\de\in(0,\frac{1}{4}\beta)$ such that if $\lam\in[\mu-\de,\mu)$, then \eqref{e4.20e} holds for every solution $w(t)$ of \eqref{e5.4e} in $\Xi[R_0,R]$.
 \item[(3)] There is an $\epsilon>0$ such that for each $\lam\in[\mu-\epsilon,\mu)$, the system $\Phi_\lam$ has a
 positively invariant absorbing set $\Xi[r_\lam,R_\lam]\X \cH$ satisfying
 $$
   r_\lam\ra\8\hs \mb{and}\hs R_\lam\ra \infty, \hs \mb{as} \hs\lam\ra \mu^-.
   $$
 \eenu
\el

\begin{proof}
Taking the inner product of \eqref{e5.4e} with $w\in Y_2$ in $X$, it yields that
\begin{align*}\label{e4.21e}
\frac{1}{2}\frac{{\rm d}}{{\rm d}t}|w|^2+\|w\|^2=\lam|w|^2+(\tilde{f}(w+\xi_{\lam,p}(w)),w)+(p(t),w).
\end{align*}
Since $\|w\|^2=\mu |w|^2$, it follows that
\be\label{e4.21e}
\frac{1}{2}\frac{{\rm d}}{{\rm d}t}|w|^2=(\lam-\mu)|w|^2+(\tilde{f}(w+\xi_{\lam,p}(w)),w)+(p(t),w).
\ee

In the following we estimate the last two terms in the above equation. Observing that the norm $\|\cdot\|_{L^1(\mathbb{R}^N)}$ of $L^1(\R^N)$ and that of $X=L^2(\mathbb{R}^N)$ are equivalent in the finite-dimensional space $Y_2,$ we deduce that
$$
\min\{\|z\|_{L^1(\mathbb{R}^N)}:\,\,z\in Y_2,\,\,|z|=1\}:=\kappa>0.
$$
In view of the Remark \ref{r3.1}, there exists $R_1>0$ such that $\|\xi_{\lam,p}(w)\|\leq R_1$, uniformly with respect to $p\in \cH$.
By the condition ({\bf G}), one can pick a positive number $\de$ with
$$
  \frac{1}{2}\bar{f}+p(x,t)\geq \de,\hs \frac{1}{2}\underline{f}-p(x,t)\geq \de.$$
From Lemma \ref{l4.2}, we infer that there exists $s_0>0$ such that if $s\geq s_0$,
\be\label{e4.22e}
(\~f(h+sz),z)=\int_{\mathbb{R}^N} f(x,h+sz)z\,{\rm d}x\geq
\frac{1}{2}\int_{\mathbb{R}^N}\(\ol fz_++\ul fz_-\){\rm d}x-\frac{1}{2}\kappa\de
\ee
for all $z\in \ol\mB_{Y_2}(1)$ and $h\in\ol\mB_X(R_1)$.
Set
$$w=sz,\hs\mb{where $s=|w|$}.$$
Then $z\in \partial \mathrm{B}_{Y_2}(1)$. Thus, if $s\geq s_0$, it follows from \eqref{e4.22e} that
\begin{align*}
&(\tilde{f}(w+\xi_{\lam,p}(w)),w)+(p(t),w)=s[(f(x,sz+\xi_{\lam,p}(w)),z)+(p(x,t),z)] \\[1ex]
\geq& s\[\(\frac{1}{2}\int_{\mathbb{R}^N}\(\ol f z_++\ul
f z_-\){\rm d}x-\frac{1}{2}\kappa\de \)+\int_{\mathbb{R}^N}\(p(x,t)z_+-p(x,t)z_-\)\d x\]\\[1ex]
=&s\[\int_{\mathbb{R}^N}\(\frac{1}{2}\ol{f}+p(x,t)\)z_++\(\frac{1}{2}\ul{f}-p(x,t)\)z_-\d x-\frac{1}{2}\kappa\de\].
\end{align*}
It is trivial to see that
\begin{align*}
&\int_{\mathbb{R}^N}\(\frac{1}{2}\ol f+p(x,t)\)z_++\(\frac{1}{2}\ul f-p(x,t)\)z_-\d x -\frac{1}{2}\kappa\de \\[1ex]
\geq&\de \int_{\mathbb{R}^N}|z|{\rm d}x-\frac{1}{2}\kappa\de\geq \frac{1}{2}\kappa\de.
\end{align*}
Hence,
\be\label{e4.23e}
(\tilde{f}(w+\xi_{\lam,p}(w)),w)+(p(t),w)\geq
\frac{1}{2}\kappa\de s=\frac{1}{2}\kappa\de |w|.
\ee
Combining \eqref{e4.21e} and \eqref{e4.23e}, we have
\be\label{e4.24e}
\frac{\d}{\d t}|w(t)|^2\geq 2\(\lam-\mu\) |w|^2+\kappa\de |w(t)|,
\ee
provided $|w(t)|\geq s_0$.
\vs
Let $R_0=s_0$,  $c_0=\kappa\de/2$. Then if $\lam\in[\mu,\mu+\frac{1}{4}\beta)$, we see from \eqref{e4.24e} that
$$
\frac{\d}{\d t}|w(t)|^2\geq \kappa\de |w(t)|>c_0|w(t)|
$$ at any point $t$ where $|w(t)|\geq R_0$, which shows the validity of assertion (1).

Assume $R>R_0$ and that $\lam<\mu$. Pick an $\eta>0$ such that $\eta R^2<\kappa\de s_0/4$. Then if $\lam\in[\mu-\eta,\mu)$ and $w(t)$ is a solution of \eqref{e5.4e} in $\Xi[R_0,R]$, we conclude from \eqref{e4.24e} that
\begin{align*}
\frac{\d}{\d t}|w(t)|^2&\geq -2|\lam-\mu|\,R^2+\kappa\de |w(t)|\\
&\geq c_0|w(t)|+\(c_0|w(t)|-2\eta R^2\)\\
&\geq c_0|w(t)|+ \(c_0s_0-2\eta R^2\) \geq c_0|w(t)|.
\end{align*}
Thus the assertion (2) holds.

Now, it remains to prove that the validity of assertion (3). Pick a sequence $R_n \,(n=1,2,\cdots)$ such that $R_0<R_1<R_2<\cdots<R_n<\cdots$ and $R_n\ra \8$ as $m\ra \8$. Then for every fixed $n$, by assertion (2) we know that there is a sequence $\eta_n>0 \,(n=1,2,\cdots)$ satisfying
$$\eta_1>\eta_{2}>\cdots>\eta_n>\cdots,\hs \eta_n\ra 0\,(n\ra \8),$$
such that for each $\lam\in [\mu-\eta_n,\mu)$, if $w(t)$ is a solution of \eqref{e5.4e} in $\Xi[R_0,R_n]$, then \eqref{e4.20e} holds. Moreover, we deduce from the choice of $\eta_n$ and \eqref{e4.20e} that for $\lam\in [\mu-\eta_n,\mu)$, if $w(t)=w(t,0;w_0,p)$ is a solution of \eqref{e5.4e} with $w_0\in \Xi[R_n,\8]$, then
$$
 w(t)\in \Xi[R_n,\8],\hs \A t\geq 0,
 $$
uniformly for $p\in\cH$.

On the other hand, let $\lam<\mu$. Recalling that the norm $\|\cdot\|_{L^1(\mathbb{R}^N)}$ of $L^1(\R^N)$ and that of $X=L^2(\mathbb{R}^N)$ are equivalent in $Y_2$, by the boundedness of $f$ and $g$, we find that
\begin{align}\label{e4.25e}
\big(\~f(w+\xi_{\lam,p}(w))+p(t),w\big)&=\int_{\R^N} \[f(x,w+\xi_{\lam,p}(w))+p(x,t)\]w\d x \no \\
&\leq \(f_\8+g_\8\)\|w\|_{L^1(\R^N)}\leq C_2|w|\no \\
&\leq \frac{\mu-\lam}{2}|w|^2+\frac{1}{2(\mu-\lam)}C_2^2,
\end{align}
where $f_\8$ and $g_\8$ denote the boundedness of $f$ and $g$, respectively, and $C_2>0$ depends on $f_\8,g_\8$.
Thus it follows from \eqref{e4.21e} and \eqref{e4.25e} that
$$
\frac{\d}{\d t}|w(t)|^2\leq (\lam-\mu)|w|^2+\frac{C_2^2}{\mu-\lam}.
$$
Applying the Gronwall's inequality, it yields that
\be\label{e4.26e}
  |w(t)|^2\leq \e^{-(\mu-\lam)t}|w(0)|^2+\big(1-\e^{-(\mu-\lam)t}\big)\frac{C_2^2}{(\mu-\lam)^2},\hs t\geq 0.
\ee
Let
$$
  R_\lam=\frac{C_2}{\mu-\lam}.
  $$
Then we see from \eqref{e4.26e} that if $w(0)\in \Xi[0,R_\lam]$, the corresponding solution
$$
  w(t)=w(t,0;w(0),p)\in \Xi[0,R_\lam], \hs t\geq 0,
$$
uniformly with respect to $p\in \cH$.

Since $\eta_n\ra 0$ as $n\ra \8$, one knows that
$$
  [\mu-\eta_1,\mu)=\Cup_{n\geq 1}[\mu-\eta_n,\mu-\eta_{n+1}).
$$
Take $\epsilon=\eta_1$. Clearly, if $\lam\in [\mu-\epsilon,\mu)$, then there is some $n$ such that $\lam\in [\mu-\eta_n,\mu-\eta_{n+1})$. By the choice of $\eta_n$, one deduces that
$$
  R_n\ra \8\Hs \mb{as \hs $\lam\ra\mu^-$}.
  $$
Therefore, let $r_\lam=R_n$. If $w_0\in\Xi[r_\lam,R_\lam]$, we deduce from the above argument that
\be\label{uni}
  w(t)=w(t,0;w_0,p)\in\Xi[r_\lam,R_\lam],
\ee
uniformly with respect to $p\in \cH$. The proof of this lemma is complete.
\end{proof}

By the above Lemma \ref{l4.3}, we obtain the following results on the Conley index of maximal compact invariant sets for \eqref{e5.4e}.

\bco\label{c4.1}
Let assumptions {\rm ($\mathbf{A1}$)-($\mathbf{A3}$)}, {\rm ({\bf F1})}, {\rm ({\bf H})} and {\rm({\bf G})} hold. Then $S_\8(\Phi_\lam)$ is uniformly bounded in $Y_2\X \cH$ with respect to $\lam\in [\mu,\mu+\frac{1}{4}\beta)$. Moreover,
\be\label{e4.27e}
h(\Phi_\mu,S_\8(\Phi_\mu))=h(\Phi_\lam,S_\8(\Phi_\lam))=\Sig^q\wedge h(\theta,\cH)
\ee
for $\lam\in [\mu,\mu+\frac{1}{4}\beta)$, where $q>0$ is given by \eqref{e4.6e}.\eco
\bo
Let $R_0$ be given in Lemma \ref{l4.3}. By virtue of  Lemma \ref{l4.3} (1), we find that
$$S_\8(\Phi_\lam)\ss \Xi[0,R_0]\X \cH,\hs \lam\in [\mu,\mu+\frac{1}{4}\beta).$$
Note that the constant $\ve$ in \eqref{e4.17e}-\eqref{e4.18e} can be arbitrary small. Since $\ve_1\in(\mu-\frac{1}{4}\beta,\mu)$ and $\ve_2\in (\mu,\mu+\frac{1}{4}\beta)$, we conclude from \eqref{e4.17e}-\eqref{e4.18e} that
\be\label{e4.28e}
 h\big(\Phi_\lam,S_\8({\Phi_\lam})\big)= \left\{\ba{ll}\Sig^0\wedge h(\theta,\cH),\hs \lam\in (\mu-\frac{\beta}{4},\mu), \\[1ex]
\Sig^q\wedge h(\theta,\cH),\hs \lam\in (\mu,\mu+\frac{\beta}{4}).\ea\right.\ee
Thus \eqref{e4.27e} follows by the continuation of Conley index.
\eo

Now we establish our main results on the dynamic bifurcation from infinity of the reduced equation \eqref{e5.4e} in this subsection.

\begin{thm}\label{t4.2}
Let the assumptions {\rm ($\mathbf{A1}$)-($\mathbf{A3}$)}, {\rm ({\bf F1})}, {\rm ({\bf H})} and {\rm({\bf G})} hold. Assume $\mu\in \sig_p$. Then $K_\lam=S_\8(\Phi_\lam)$ is nonempty for each $\lam\in J$. Furthermore, there exists $\eta>0$ such that the following results hold:
\benu
\item[(1)] For every $\lam\in \Lam_1:=[\mu-\eta,\mu)$, $K_\lam$ has a Morse decomposition $\sM=\{K_\lam^1,K_\lam^\8\}$.
\item[(2)] $K_\lam^1$ is bounded on $\Lam_1$, while
    \be\label{e4.29e}
    \lim_{\lam\ra\mu^-}\min_{\~w\in K_\lam^\8}\|w\|=\8,
    \ee
    where $\~w=(w(t,0;w_0,p),\theta_tp)$. Moreover, $K_\lam^\8$ is an attractor for the skew-product flow $\Phi_\lam$.
\item[(3)] Both of the sets $\sS^1$ and $\sS^\8$ have a component $\Gam$ satisfying $\Gam[\lam]\neq \emp$ for each $\lam\in\Lam_1$, where
$$\sS^1=\ol{\Cup_{\lam\in\Lam_1}(K_\lam^1\X\{\lam\})},\hs \sS^\8=\ol{\Cup_{\lam\in\Lam_1}(K_\lam^\8\X\{\lam\})}.$$
\eenu
\end{thm}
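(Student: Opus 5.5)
Nonemptiness of $K_\lam$ for every $\lam\in J$ comes from the Conley index. By \eqref{e4.28e} and Corollary \ref{c4.1}, $h(\Phi_\lam,S_\8(\Phi_\lam))$ equals $\Sig^0\wedge h(\theta,\cH)$ for $\lam<\mu$ and $\Sig^q\wedge h(\theta,\cH)$ for $\lam\geq\mu$. Since $\cH$ is compact and invariant under $\theta$, $h(\theta,\cH)$ is the pointed space $\cH\cup\{*\}$. Under ({\bf H}) its reduced cohomology is $H^*(\cH)$, which is nonzero because $H^0(\cH)\neq 0$. The Appendix lemma (Lemma \ref{le4.3}) then gives a smash product with nonzero reduced cohomology, so the index is nontrivial, and (P1) yields $K_\lam\neq\emp$.

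For (1) and (2) I will use Lemma \ref{l4.3}. Let $\epsilon$ be as in Lemma \ref{l4.3}(3), and fix $R>R_0$ large with the corresponding $\de$ from Lemma \ref{l4.3}(2). Set $\eta=\min\{\epsilon,\de\}$ and $\lam\in\Lam_1=[\mu-\eta,\mu)$. Define
$$K_\lam^1=K_\lam\cap(\Xi[0,R_0]\X\cH),\hs K_\lam^\8=K_\lam\cap(\Xi[r_\lam,R_\lam]\X\cH),$$
where $r_\lam\geq R$ may be assumed by shrinking $\eta$, as in the proof of Lemma \ref{l4.3}(3). The set $\Xi[r_\lam,R_\lam]\X\cH$ is compact (finite-dimensional $Y_2$ and compact $\cH$), positively invariant and absorbing. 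Hence its $\w$-limit set is a compact invariant set attracting a neighbourhood of itself, and I take this to be $K_\lam^\8$. This makes $K_\lam^\8$ an attractor, and $\min_{K_\lam^\8}|w|\geq r_\lam\ra\8$ gives \eqref{e4.29e} (norms on $Y_2$ are equivalent).

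The main step is to show that every full orbit in $K_\lam$ meeting $\Xi(R_0,r_\lam)$ has $\w$-limit in $K^\8_\lam$ and $\w^*$-limit in $K^1_\lam$. On $\Xi[R_0,R]$ inequality \eqref{e4.20e} shows $|w|^2$ is strictly increasing along solutions. The open question is the gap $[R,r_\lam]$, and I would close it by choosing $R_n$ exactly as in the proof of Lemma \ref{l4.3}(3), so that \eqref{e4.20e} holds on all of $\Xi[R_0,r_\lam]$. Given that, $|w(t)|$ is monotone on that shell. A bounded full solution therefore cannot stay there as $t\ra\pm\8$: backward in time it must enter $\Xi[0,R_0]$, and forward in time it enters the absorbing shell. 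So the sets $K^1_\lam$ (the maximal invariant set of $\Xi[0,R_0]\X\cH$, i.e. the repeller dual to $K^\8_\lam$) and $K^\8_\lam$ form a Morse decomposition, ordered by the Lyapunov-type quantity $|w|$. Boundedness of $K^1_\lam$ on $\Lam_1$ is immediate from the uniform bound $R_0$.

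For (3) I would use continuation. On $\Lam_1$, the set $\Xi[0,R_0+1]\X\cH$ is an isolating neighbourhood of $K^1_\lam$, uniform in $\lam$, and it continues to $\lam=\mu$ as an isolating neighbourhood of $S_\8(\Phi_\mu)$ by Corollary \ref{c4.1}. Hence $h(\Phi_\lam,K^1_\lam)=\Sig^q\wedge h(\theta,\cH)$, which is nontrivial. Since $K^\8_\lam$ is an attractor of the compact, positively invariant, absorbing region $\Xi[r_\lam,R_\lam]\X\cH$, its index is $h(\theta,\cH)$ (equivalently $\Sig^0\wedge h(\theta,\cH)$), which is also nontrivial. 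I would then apply the connectivity result \cite[Theorem 4.5]{LW25}, or the standard Rybakowski continuation–connectedness argument: a uniform admissible isolating neighbourhood over compact subintervals $[\mu-\eta,\mu-\ve']$ together with a nontrivial index gives a connected component of $\sS^1$ (resp. $\sS^\8$) meeting every $\lam$-section. For $\sS^\8$ the isolating neighbourhood $\Xi[r_\lam,R_\lam]$ moves with $\lam$, so I would work on compact subintervals and take a diagonal/limit of the components, using the closedness of $\sS^\8$. I expect the hardest points to be this uniformity issue and the gap control in the Morse decomposition.
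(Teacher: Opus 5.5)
Your proposal is essentially correct. For parts (1)--(2) it takes a different route from the paper, and a few statements need repair.

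The paper fixes a $\lambda$-independent splitting radius $R_1$, chosen so that an isolating neighbourhood $\cN_1$ of $K_\mu$ lies inside $\Xi[0,R_1]\times\cH$. It sets $K^\infty_\lambda=S_\infty(\Phi_\lambda,\Xi[R_1,\infty]\times\cH)$ and proves $K^\infty_\lambda\neq\emptyset$ by a Conley-index contradiction ($\Sigma^0\wedge h(\theta,\cH)$ versus $\Sigma^q\wedge h(\theta,\cH)$, via Lemma \ref{le4.3}). Only in part (2) does it identify $K^\infty_\lambda$ with the attractor in the absorbing shell of Lemma \ref{l4.3}(3).

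You instead define $K^\infty_\lambda$ directly as the $\omega$-limit set of the compact positively invariant shell. Nonemptiness, compactness and the attractor property are then automatic, and no cohomology enters (1)--(2). The Morse property reduces to monotonicity of $|w|$ on $\Xi[R_0,r_\lambda]$. That is exactly the gap-closing through the radii $R_n$ of Lemma \ref{l4.3}(3) which the paper itself uses in part (2).

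In (3), your explicit neighbourhood $\Xi[0,R_0+1]\times\cH$, continued up to $\lambda=\mu$ by Lemma \ref{l4.3}(1) and Corollary \ref{c4.1}, is more concrete than $\cN_1$. Reading off nontriviality of $h(\Phi_\lambda,K^\infty_\lambda)$ from the attractor structure also avoids the paper's formula $h(\Phi_\lambda,K_\lambda)=h(\Phi_\lambda,K^1_\lambda)\vee h(\Phi_\lambda,K^\infty_\lambda)$. (P2) does not justify that formula, since $K_\lambda$ contains connecting orbits.

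What the paper's fixed radius buys is a $\lambda$-independent isolating neighbourhood of $K^\infty_\lambda$. You can have that too, and drop $r_\lambda$ entirely, by using a fixed inner radius $R$ with $\eta\leq\delta(R)$ from Lemma \ref{l4.3}(2):
\begin{itemize}
\item $\Xi[R,R_\lambda+1]\times\cH$ is then compact and positively invariant: orbits cross outward at $|w|=R$ by \eqref{e4.20e}, and inward at $|w|=R_\lambda+1$ by the bound $C_2|w|$ in \eqref{e4.25e}.
\item Its maximal invariant set is the attractor $K^\infty_\lambda$.
\item The limit in (2) follows from Lemma \ref{l4.3}(2) applied with radii $R'\to\infty$.
\end{itemize}

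Three repairs are needed.
\begin{enumerate}
\item Your initial definitions $K^1_\lambda=K_\lambda\cap(\Xi[0,R_0]\times\cH)$ and $K^\infty_\lambda=K_\lambda\cap(\Xi[r_\lambda,R_\lambda]\times\cH)$ are not invariant, because they contain tails of connecting orbits. Use the maximal invariant sets $S_\infty(\Phi_\lambda,\cdot)$, as you effectively do later.
\item The index of $K^\infty_\lambda$ is not $h(\theta,\cH)$. With index pair $(N,\emptyset)$, $N=\Xi[r_\lambda,R_\lambda+1]\times\cH$, the index is $N^+$. The shell is an annulus in $Y_2\cong\mathbb{R}^d$ with $d=\dim Y_2$, so $N^+\simeq(S^{d-1}\times\cH)^+$. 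Nontriviality still holds, simply because $N\neq\emptyset$. Likewise, nonemptiness of $K_\lambda$ rests on the cross-product isomorphism of Theorem \ref{th0.1}, not on Lemma \ref{le4.3}.
\item Closedness of $\sS^\infty$ does not by itself let you pass from compact subintervals to $[\mu-\eta,\mu)$, because $\sS^\infty$ is unbounded near $\mu$. One way to do it:
\begin{itemize}
\item Take continua $C_k\subset\sS^\infty$ spanning $[\mu-\eta,\mu-1/k]$.
\item Let $x$ be a limit of points $x_k\in C_k$ in the compact section over $\lambda=\mu-\eta$.
\item For each fixed $m$, apply boundary bumping to the component of $C_k\cap\{\lambda\leq\mu-1/m\}$ through $x_k$, and take its upper limit in that compact slab.
\item These continua all contain $x$, so their union lies in one component of $\sS^\infty$ meeting every section.
\end{itemize}
\end{enumerate}
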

\begin{proof}
From Corollary \ref{c4.1} and \eqref{e4.7e}, we infer that
$K_\lam=S_\8(\Phi_\lam)$ is compact for $\lam\in J=(\mu-\frac{1}{4},\mu+\frac{1}{4}\beta)$. By Theorem \ref{th0.1}, \eqref{e4.27e} and \eqref{e4.28e}, we see that the Conley index of $K_\lam$ is nontrivial. Thus $K_\lam$ is nonempty.

(1)
Let $R_0$ be the number given by Lemma \ref{l4.3}. Since $K_\mu$ is isolated for $\Phi_\mu$, we can take a bounded isolated neighborhood $\cN_1$ of $K_\mu$ so that
\be\label{e4.30e}
K_\mu\ss \Xi [0,R_0]\X\cH\ss \cN_1.
\ee
Then there exists $\de_1\in (0,\frac{\beta}{4})$ such that $\cN_1$ is also an isolated neighborhood of $\Phi_\lam$ for $\lam\in\Lam:= [\mu-\de_1,\mu+\de_1]$. Thus
$$
 h(\Phi_\lam, K_\lam^1)=\mb{const.},\Hs \A \lam\in \Lam,
$$
where $K_\lam^1=S_\8(\Phi_\lam,\cN_1)$. By Corollary \ref{c4.1}, one has
\be\label{e4.31e}
h(\Phi_\lam, K_\lam^1)=\Sig^q\wedge h(\theta,\cH),\Hs \A \lam\in \Lam.
\ee
Let $c_0$ be the number given by Lemma \ref{l4.3}. Pick $R_1>R_0$ sufficiently large so that
\be\label{e4.32e}
\cN_1\ss\Xi[0,R_1]\X\cH.
\ee
By Lemma \ref{l4.3} (2), we know that there exists $\de>0$ with $\de<\de_1$ such that for each $\lam\in[\mu-\de,\mu)$, if $w(t)=w(t,0;w_0,p)$ is a solution of \eqref{e5.4e} in $\Xi[R_0,R_1]$, then
\be\label{e4.33e}
\frac{{\rm d}}{{\rm d}t}|w(t)|^2\geq c_0 |w(t)|\geq c_0R_0>0.
\ee

Pick $\eta\leq \de$. Then if $\lam\in\Lam_1:=[\mu-\eta,\mu)$ and $w(t)$ is a bounded full solution of \eqref{e5.4e} with $w(t_0)\in \Xi[R_0,R_1]$ for some $t_0$,
we can conclude from \eqref{e4.33e} that there is a $T>0$ such that
\be\label{e4.34e}
w(t)\in \Xi[0,R_0]\, (t<-T_0)\hs\mb{and}\hs w(t)\in \Xi[R_1,\8] \,(t>T_0).
\ee
Combining \eqref{e4.30e}, \eqref{e4.32e} and \eqref{e4.34e}, it yieds that
\be\label{e4.35e}
K_\lam^1\ss \Xi[0,R_0]\X\cH\ss \cN_1,\Hs\lam\in\Lam_1.
\ee
Because $\cN_1$ is an isolating neighborhood of $K_\lam^1$, by \eqref{e4.35e} one knows that $K_\lam^1$ is also the maximal compact invariant set of $\Phi_\lam$ in $\Xi[0,R_0]\X\cH$. This shows that $\Xi[0,R_0]\X\cH$ is an isolating neighborhood of $K_\lam^1$ as well.
Set
$$
  K_\lam^\8=S_\8(\Phi_\lam,\Xi[R_1,\8]\X\cH),\Hs\lam\in\Lam_1.
  $$
Clearly, $K_\lam^\8\ss K_\lam$.

Now we prove that $\sM=\{K_\lam^1,K_\lam^\8\}$ forms a Morse decomposition of $K_\lam$ for $\lam\in \Lam_1$. For this purpose, we first show that if $\~w(t)$ is a bounded full solution of $\Phi_\lam$ in $K_\lam\backslash(K_\lam^1\cup K_\lam^\8)$, then
\be\label{e4.36e}
\omega^*(\~w)\ss K_\lam^1, \Hs \omega(\~w)\ss K_\lam^\8.
\ee
Indeed, let $\~w(t)=(w(t),\theta_tp)$ be such a solution. It is easy to see that the maximal compact invariant set $K_\lam\ss \Xi[0,\8]\X\cH$ for $\lam\in\Lam_1$. Note that $K_\lam^1\cup K_\lam^\8\ss K_\lam$. Since $K_\lam^1$ and $K_\lam^\8$ are maximal compact invariant sets in $\Xi[0,R_0]\X\cH$ and $\Xi[R_1,\8]\X\cH$, respectively, one can deduce that there is $t_0$ such that $w(t_0)\in \Xi[R_0,R_1]$. Hence \eqref{e4.36e} follows by \eqref{e4.34e}.

 In the following we prove that $K_\lam^\8\neq \emp$, $\lam\in \Lam_1$, and hence $\sM$ is a Morse decomposition of $K_\lam$. Suppose the contrary. Then by \eqref{e4.36e} one knows that $K_{\lam_0}=K_{\lam_0}^1$ for some $\lam_0\in\Lam_1$. Thus, it follows from \eqref{e4.28e} that
\be\label{e4.37e}
h(\Phi_{\lam_0},K_{\lam_0}^1)=h(\Phi_{\lam_0},K_{\lam_0})=\Sigma^0\wedge h(\theta,\cH).
\ee
Since $H^*(\cH)$ is free and finitely generated and $q\geqslant1$, by Lemma \ref{le4.3} and similar to the analysis for \eqref{e4.17e} and \eqref{e4.18e}, we obtain a contradiction from \eqref{e4.37e} and \eqref{e4.31e}.

(2) It can be assumed that the $\eta\leq \epsilon$, where $\epsilon$ is given by Lemma \ref{l4.3} (3). Clearly, $K_\lam^1$ is bounded on $\Lam_1=[\mu-\eta,\mu)$. Let $\lam\in \Lam_1$. Thanks to Lemma \ref{l4.3} (3), the skew-product flow $\Phi_\lam$ has a positively invariant absorbing set $\Xi_\lam:=\Xi[r_\lam,R_\lam]\X \cH$. Since $Y_2$ is finite-dimensional and $\cH$ is compact, one can conclude from the attractor theory (see e.g., \cite{Chep,CLR}) that the $\Phi_\lam$ has a global attractor
$$
  \cA_\lam^\8=\Cup_{p\in\cH}\big(A_\lam^\8(p)\X\{p\}\big),\hs \lam\in \Lam_1
  $$
in $\Xi_\lam$, where
$$A_\lam^\8(p)=\Cap_{s\geq 0}\ol{\Cup_{t\geq s}\phi_\lam(t,\theta_{-t}p)\Xi_\lam(\theta_{-tp})}.$$

In what follows we further show that $K_\lam^\8=\cA_\lam^\8$ for $\lam\in \Lam_1$. Let $\lam\in \Lam_1$. Since $\Xi_\lam\ss \Xi[R_1,\8]\X\cH$, one can easily see that $\cA_\lam^\8\ss K_\lam^\8$.
Assume $\~w(t)=(w(t),\theta_tp)\in K_\lam^\8$ is a bounded full solution of $\Phi_\lam$. Then $\~w(t)\in \Xi[R_1,\8]\X\cH$. Recalling $w(t)=w(t,0;w_0,p)$ satisfies \eqref{e4.26e} uniformly with respect to $p\in\cH$, we conclude that $\~w(t)\in \Xi[R_1,R_\lam]\X\cH$.

Now we show that $\~w(t)$ is necessarily contained in $\Xi[r_\lam,R_\lam]\X\cH$. Suppose the contrary. Then there would be some $t_0\in \R$ such that $w(t_0)\in \Xi[R_1,r_\lam]$. Since $R_1>R_0$, one finds that $w(t_0)\in \Xi[R_0,r_\lam]$. Using the same argument of \eqref{e4.34e}, it can be easily shown that there exists $T_2>0$ such that
$$w(t)=w(t,0;w_0,p)\in \Xi[0,R_0],\hs t<-T_2.$$
This leads to a contradiction. Thus $\~w(t)\in \Xi[r_\lam,R_\lam]\X\cH$, which implies that $K_\lam^\8\ss \cA_\lam^\8$.

To verify the validity of assertion (2), it remains to prove that \eqref{e4.29e} holds. Note that $K_\lam^\8\ss \Xi[r_\lam,R_\lam]\X\cH$. Let
$\~w(t)=(w(t),\theta_tp)\in K_\lam^\8$ be a full solution. Then we deduce from \eqref{uni} that $w(t)=w(t,0;w_0,p)\in \Xi[r_\lam,R_\lam]$ uniformly with respect to $p\in \cH$. Since
$$
 r_\lam\ra \8\hs \mb{and}\hs R_\lam\ra \8\Hs \mb{as} \hs \lam\ra\mu^-,
 $$
we conclude that \eqref{e4.29e} holds true.

(3) Finally, we prove the validity of assertion 3. Let $\cU=\Xi[R_1,\8]\X \cH$. Then 
$$
  \pa \cU=\{w\in Y_2:|w|=R_1\}\X\cH.$$ 
For each $R>R_1$, from Lemma \ref{l4.3} (2) we infer that there exists $\de_2>0$ such that if $\lam\in [\mu-\de_2,\mu)$ and $w(t)$ is a solution of \eqref{e5.4e} in $\Xi[R_0,R]$, then \eqref{e4.20e} holds. It may be assumed that the $\eta<\de_2$. Thus if $\lam\in [\mu-\eta,\mu)$, then every bounded full solution in $\Xi[R_1,\8]\X\cH$ is necessarily contained in $\Xi[R,\8]\X\cH$,  which shows that
$$
 K_\lam^\8\ss\Xi[R,\8]\X\cH.
 $$
Hence $\cU$ is an isolating neighborhood of $K_\lam^\8$. It is trivial to see that
$$h(\Phi_\lam,K_\lam)=h(\Phi_{\lam},K^1_\lam)\vee h(\Phi_\lam,K^\8_\lam),\hs \lam\in \Lam_-=[\mu-\eta,\mu).$$
Then we infer from \eqref{e4.28e} and \eqref{e4.31e} that
$$h(\Phi_{\mu-\eta},K_\lam^\8)=h(\Phi_{\mu-\eta},\cU)\neq \ol{0}.$$
 Therefore, by \cite[Theorem 4.4]{LW25} (or \cite[Theorem 3.4]{LLZ}), one can conclude that $\sS^\8$ has a component $\Gam$ such that one of the assertions 1-3 in \cite[Theorem 4.4]{LW25} holds true. Since $\cU$ is an isolating neighborhood and $\Gam[\lam]\ss {\rm int}\,\cU$ for all $\lam\in \Lam_1$, we deduce that either $\Gam$ is unbounded or $\Gam[\mu]\neq \emp$ for $\lam\in \Lam_1$. Recalling that
 $$
  K_\lam^\8\ss \Xi [R_1,\8]\X\cH\hs \mb{and} \hs K_\mu\ss \Xi [0,R_0]\X\cH,
  $$
 one can immediately conclude that $\Gam$ is unbounded for $\lam\in \Lam_1$. Because
 $\Gam[\lam]$ is uniformly bounded on $[\mu-\eta,\mu-\ve]$ for every $\ve\in (0,\eta)$, we have $\Gam[\lam]\neq \emp$ for $\lam\in \Lam_1$.

The argument for $\sS^1$ is similar, we omit the details.
\end{proof}

By the proof of the above theorem, we can further show that if $\lam\in \Lam_1:=[\mu-\eta,\mu)$, then the set $K_\lam$ (in Theorem \ref{t4.2}) has a Morse decomposition $M=\{\cA_\lam^\8,\cA_\lam^1\}$ in the sense of the work in \cite{ACCL}, established by Aragao-Costa, Caraballo, Carvalho and Langa.
\bt\label{t4.3}
Let the conditions in Theorem \ref{t4.2} hold.  Then for each $\lam\in \Lam_1:=[\mu-\eta,\mu)$, the maximal compact invariant set $K_\lam$ of \eqref{e5.4e} has a Morse decomposition $M=\{\cA_\lam^\8,\cA_\lam^1\}$ in the sense of the work in \cite{ACCL}, where $\cA_\lam^\8$ satisfies \eqref{e4.29e} and $\cA_\lam^1$ remains bounded on $\Lam_1$.
\et
\bo
Let $\lam\in \Lam_1$. Note that $K_\lam^1=S_\8(\Phi_\lam,\cN_1)$ is the maximal compact invariant set of the skew-product flow $\Phi_\lam=(\phi_\lam,\theta)$ generated by \eqref{e5.4e} with $\cN_1$ being its isolating neighborhood. Let $\phi_\lam^{-1}$ denote the inverse flow of $\phi_\lam$. Then by the attractor theory (see e.g., \cite{CLR}), we deduce that $(\phi_\lam^{-1},\theta)$ has an attractor $\cA_\lam^1$ in $\cN_1$ satisfying
$$\cA_\lam^1=\Cup_{p\in\cH}\big(A_\lam^1(p)\X\{p\}\big).$$
Hence the family of sets $\{A_\lam^1(p)\}_{p\in \cH}$ is the pullback attractor for the skew-product flow $(\phi_\lam^{-1},\theta)$. Recalling that $\Xi[R_1,\8]\X\cH$ is an isolating neighborhood of $K_\lam^\8=\cA_\lam^\8$, we conclude from \eqref{e4.35e} that $M=\{\cA_\lam^\8,\cA_\lam^1\}$ is an isolated invariant families of $K_\lam$.  It is clear to see that
$$
  K_\lam\ss \Xi[0,\8]\X\cH\hs \mb{and} \hs\cA_\lam^\8\cup \cA_\lam^1\ss K_\lam.
  $$
Since $\cA_\lam^1$ and $\cA_\lam^\8$ are maximal compact invariant sets in $\Xi[0,R_0]\X\cH$ and $\Xi[R_1,\8]\X\cH$, respectively, we deduce that if
$$
  \~w(t)=(w(t),\theta_tp)=(w(t,0;w_0,p),\theta_tp)\in K_\lam\backslash(\cA_\lam^1\cup \cA_\lam^\8)
  $$
is a full solution of $\Phi_\lam$, then there exists $t_0$ such that $w(t_0)\in \Xi[R_0,R_1]$, and hence \eqref{e4.33e} and \eqref{e4.34e} hold. This shows that
$$
  \lim_{t\ra \8}\d_H(\phi_\lam(t)w_0,A_\lam^\8(\theta_tp))=0\hs\mb{and}\hs \lim_{t\ra -\8}\d_H(\phi_\lam(t)w_0,A_\lam^1(\theta_tp))=0,
  $$
where $\d_H(A,B)$ denotes the Hausdorff semidistance between $A$ and $B$. Therefore, $M=\{\cA_\lam^\8,\cA_\lam^1\}$ forms a Morse decomposition of $K_\lam$ for $\lam\in \Lam_1$ in the sense of the work in \cite{ACCL}. It is clear to see that $\cA_\lam^1$ remains bounded on $\Lam_1$. Since $K_\lam^\8=\cA_\lam^\8$ for $\lam\in \Lam_1$, it follows that $\cA_\lam^\8$ satisfies \eqref{e4.29e} as well. The proof is complete.
\eo
\br
By the proof of Theorems \ref{t4.2} and \ref{t4.3} we know that the Morse decomposition $M=\{\cA_\lam^\8,\cA_\lam^1\}$, $\lam\in\Lam_1$ satisfies $$\cA_\lam^1=\Cup_{p\in\cH}\big(A_\lam^1(p)\X\{p\}\big),\hs \cA_\lam^\8=\Cup_{p\in\cH}\big(A_\lam^\8(p)\X\{p\}\big),$$
where the family of sets $\{A_\lam^\8(p)\}_{p\in \cH}$ is a pullback attractor for the skew-product flow $(\phi_\lam,\theta)$.
\er

\setcounter {equation}{0}
\section{Bifurcation of the original equation}

\hs \, By virtue of Theorems \ref{t4.2} and \ref{t4.3}, we obtain the corresponding bifurcation results of the original system \eqref{e5.3e} on the invariant manifold $\cM_{\lam,p}$.

Let $w(t)=w(t,0;w_0,p)$ be the bounded full solution of \eqref{e5.4e}. Define
$$
\cK_\lam=\{w+\xi_{\lam,p}(w):\~w\in K_\lam\},\hs \sM=\{\cK_\lam^1,\cK_\lam^\8\},\hs \lam\in \Lam_1=[\mu-\eta,\mu),
$$
where $\~w=(w(t),\theta_tp)$, and
$$
  \cK_\lam^1=\{w+\xi_{\lam,p}(w):\~w\in K_\lam^1\},\hs\cK_\lam^\8=\{w+\xi_{\lam,p}(w):\~w\in K_\lam^\8\}.
  $$
Then using the same argument as the proof of invariance for $\cM_{\lam,p}$ in Theorem \ref{t3.1}, one can easily show that $\cK_\lam,\cK_\lam^1,\cK_\lam^\8$ are compact invariant sets of the original equation \eqref{e5.3e} contained in $\cM_{\lam,p}$. Furthermore, we have the following conclusions.
\begin{thm}\label{t6.1}
Let the assumptions {\rm ($\mathbf{A1}$)-($\mathbf{A3}$)}, {\rm ({\bf F1})}, {\rm ({\bf H})} and {\rm ({\bf G})} hold. Assume $\mu\in \sig_p$. Then $\cK_\lam$ is nonempty for every $\lam\in J$. Furthermore, there is an $\eta>0$ such that the following results holds:
\benu
\item[(1)] For every $\lam\in \Lam_1=[\mu-\eta,\mu)$, $\cK_\lam$ has a Morse decomposition $\sM_1=\{\cK_\lam^1,\cK_\lam^\8\}$.
\item[(2)] $\cK_\lam^1$ remains bounded on $\Lam_1$, while
    \be\label{e5.29e}
    \lim_{\lam\ra\mu}\min_{\~u\in \cK_\lam^\8}\|u\|=\8,
    \ee
    where $\~u=(u(t,0;u_0,p),\theta_tp)$. Moreover, $\cK_\lam^\8$ is an attractor for the skew-product flow $(\psi,\theta)$ generated by \eqref{e5.3e}.
\item[(3)] Both of the sets $\sC^1$ and $\sC^\8$ have a component $\Upsilon$ satisfying $\Upsilon[\lam]\neq \emp$ for each $\lam\in\Lam_1$, where
$$\sC^1=\ol{\Cup_{\lam\in\Lam_1}(\cK_\lam^1\X\{\lam\})},\hs \sC^\8=\ol{\Cup_{\lam\in\Lam_1}(\cK_\lam^\8\X\{\lam\})}.$$
\eenu
\end{thm}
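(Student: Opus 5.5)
The plan is to transport Theorems \ref{t4.2} and \ref{t4.3} from the reduced system \eqref{e5.4e} to the original system \eqref{e5.3e} through the lift
$$
L_\lam:(w,p)\mapsto \big(w+\xi_{\lam,p}(w),p\big),\qquad Y_2\X\cH\ra Y\X\cH .
$$
The first step is to show that $L_\lam$ is a homeomorphism onto the invariant set $\bigcup_{p}\cM_{\lam,p}\X\{p\}$. It conjugates $\Phi_\lam$ with the restriction of the skew-product flow $(\psi,\theta)$. For the conjugacy:
\begin{itemize}
\item Injectivity holds because $P_2L_\lam(w,p)=(w,p)$.
\item Continuity in $w$ comes from the Lipschitz property of $\xi_{\lam,p}$ (Theorem \ref{t5.1}).
\item Continuity in $(w,p)$ jointly is expected from the fixed-point representation \eqref{e3.23}, since $G_{\lam,p,w}$ depends continuously on $p$ in the compact-open topology while the contraction constant stays uniform.
\item The conjugacy identity $\psi(t,p)(w+\xi_{\lam,p}(w))=\phi_\lam(t,p)w+\xi_{\lam,\theta_tp}(\phi_\lam(t,p)w)$ is exactly the invariance argument at the end of the proof of Theorem \ref{t3.1}, together with uniqueness of solutions.
\end{itemize}
Consequently $\cK_\lam=L_\lam(K_\lam)$, $\cK_\lam^1=L_\lam(K_\lam^1)$ and $\cK_\lam^\8=L_\lam(K_\lam^\8)$ are compact and invariant. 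Nonemptiness of $\cK_\lam$ for $\lam\in J$ then follows from Theorem \ref{t4.2}.

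For (1), bounded full solutions of \eqref{e5.3e} correspond bijectively, via $P_2$ and $L_\lam$, to bounded full solutions of \eqref{e5.4e}. This uses that every bounded full solution lies in $\cV_\beta$, hence is the fixed point of $G$ with $w=P_2u(0)$, and hence lies on $\cM_{\lam,p}$. The connecting-orbit property \eqref{e4.36e} also transfers, since $\w,\w^*$-limit sets commute with the homeomorphism $L_\lam$. Disjointness and isolation of $\cK^1_\lam$ and $\cK^\8_\lam$ follow from those of $K^1_\lam$ and $K^\8_\lam$, so $\sM_1$ is a Morse decomposition of $\cK_\lam$.

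For (2), Remark \ref{r3.1} gives $\|\xi_{\lam,p}(w)\|\le C$ uniformly in $\lam,p,w$, because $f$ and $g$ are bounded by ({\bf A3}) and ({\bf G}). Boundedness of $\cK^1_\lam$ on $\Lam_1$ is then inherited from $K^1_\lam\ss\Xi[0,R_0]\X\cH$. Using $\|u\|\ge c|P_2u|$ (bounded projection, norms equivalent on $Y_2$) together with $K^\8_\lam\ss\Xi[r_\lam,R_\lam]\X\cH$ and $r_\lam\ra\8$, we get \eqref{e5.29e}.

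The attractor claim is where I expect the main obstacle. $\cK^\8_\lam$ attracts a neighborhood within the manifold. To attract an honest neighborhood in $Y\X\cH$, one needs the manifold to be attracting for $\psi$: every solution approaches $\cM_{\lam,\theta_tp}$ exponentially. With $\sig_1\ne\emp$ this is false, because directions in $X_1$ are unstable. I would therefore first try to interpret "attractor" relative to the invariant manifold, i.e.\ for the restricted flow, and verify it by the conjugacy and Theorem \ref{t4.2}(2). If a full attractor is required, I would restrict to the case $\sig_1=\emp$ ($\mu$ the lowest eigenvalue). In that case a Lyapunov–Perron estimate on $u-(P_2u+\xi(P_2u))$, using \eqref{e3.4} and ({\bf F1}), gives exponential tracking, and hence attraction in $Y\X\cH$.

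For (3), the map $(\~w,\lam)\mapsto(L_\lam\~w,\lam)$ must be continuous on $Y_2\X\cH\X\Lam_1$. This requires continuity of $\xi_{\lam,p}$ in $\lam$, again from the uniform contraction of $G$, whose operators $\e^{-A^it}$ depend continuously on $\lam$. Under this map the closures $\sS^1,\sS^\8$ go onto $\sC^1,\sC^\8$ (closedness follows by passing to the limit in the fixed-point equation). Connected components map to connected sets preserving $\lam$-sections. Taking $\Upsilon$ to be the component of $\sC^{1}$ (resp.\ $\sC^\8$) containing the image of the component $\Gam$ from Theorem \ref{t4.2}(3), we get $\Upsilon[\lam]\supseteq L_\lam(\Gam[\lam])\ne\emp$ for all $\lam\in\Lam_1$.
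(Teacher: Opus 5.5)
Your proposal is correct and takes the same route as the paper, which gives no separate argument beyond transporting the reduced-system results of Section 5 to \eqref{e5.3e} through the lift $w\mapsto w+\xi_{\lam,p}(w)$ and the invariance argument of Theorem \ref{t3.1}; you also supply details the paper leaves implicit, namely joint continuity of $\xi$ in $(w,p,\lam)$, the conjugacy with the moving fiber $\cM_{\lam,\theta_tp}$, and the fact that every bounded full solution lies on the manifold. Your caveat on the attractor claim is justified: the lift only yields attraction within the invariant manifold, which is all the paper's argument gives, because for $\sig_1\neq\emp$ the $X_1$-directions are unstable, so $\cK_\lam^\8$ cannot attract a neighborhood in $Y\X\cH$.
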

\br\label{r6.1}
In \cite{CK}, \'Cwiszewski and Kryszewski studied the bifurcation from infinity of elliptic equation on $\R^N$ by giving some results of the corresponding parabolic equation \eqref{e5.1e} with $p=0$. Under the Landesman-Lazer type conditions (see \cite[p.3]{CK}) or the assumption that the isolated eigenvalue of $A$ is of odd multiplicity, they established a theorem on bifurcations from infinity of the elliptic equation. Theorem \ref{t6.1} significantly extends and improves the results in \cite{CK} in some sense.
\er
Finally, set
$$
  \sA_\lam^1=\{w+\xi_\lam(w):\~w\in \cA_\lam^1\},\hs\sA_\lam^\8=\{w+\xi_\lam(w):\~w\in \cA_\lam^\8\}.
  $$
Then by Theorem \ref{t4.3}, we also obtain the following results.

\bt\label{t5.2}
Let the conditions in Theorem \ref{t4.2} hold.  Then for each $\lam\in \Lam_1:=[\mu-\eta,\mu)$, the maximal compact invariant set $\cK_\lam$ of \eqref{e5.3e} has a Morse decomposition $M_1=\{\sA_\lam^\8,\sA_\lam^1\}$ in the sense of the work \cite{ACCL}, where the family of sets $\sA_\lam^\8$ is a pullback attractor for the original equation \eqref{e5.3e}.
\et

\section{Summary and remark}

\hs \, In this work, we study the dynamic bifurcation from infinity of the nonautonomous evolution equation \eqref{e3.1e} on unbounded domains by using the invariant manifold and the reduced singular cohomology groups method, which is different from those in the literature.

Firstly, we establish a nonautonomous global invariant manifold for the equation \eqref{e3.1e}, and then we reduce \eqref{e3.1e} to this invariant manifold, which generates a finite-dimensional nonautonomous system. Secondly, we construct a homotopy between the skew-product flow generated by the finite-dimensional reduced system and a product flow. This homotopy plays an important role in discussing the Conley index of invariant sets. Using this homotopy and some topological consequences on the reduced singular
cohomology groups (see the Appendix in \cite{LW25}), we establish some results on bifurcations from infinity of the reduced equation and the original equation. As an example, the nonautonomous parabolic equation \eqref{e1.3} is studied. Under the Landesman-Lazer type condition \eqref{LL}, we give more detailed descriptions (see Lemma \ref{l4.3}) on the dynamical behaviors of the reduced equation, and establish some precise results (see Theorems \ref{t4.2}, \ref{t4.3}) on the dynamic bifurcation from infinity of the reduced equation. Finally, the corresponding results on dynamic bifurcations from infinity of the original equation \eqref{e3.1e} are derived.
\vs
We remark that the approach, using the invariant manifold and the reduced singular cohomology groups to investigate the dynamic bifurcation from infinity for \eqref{e3.1e} can be applied to a vast body of nonautonomous and autonomous evolution equations on bounded or unbounded domains. Indeed, if the corresponding operator of a nonlinear evolution equation is a sectorial operator having an isolated eigenvalue of finite multiplicity, then our main results in this work still hold true under some suitable conditions on the nonautonomous term and the Landesman-Lazer type condition on nonlinear terms.

\section{Appendix: Topological consequences on Conley index}\label{s6}

\hs\, In the Appendix, for readers' convenience we collect some topological consequences on the reduced singular
 cohomology groups from \cite{Hat,WLD,LW25}.

 Assume that $(X,x_0)$ and $(Y,y_0)$ are two pointed spaces. By $H^*$ ($\~H^*$) we denote the (reduced) singular cohomology theories with coefficient group $\mathbb Z$.

It is well-known that the cross product $``\X"$ (\cite{Hat}) defined on the tensor product in the following
$$H^*(X,x_0)\otimes H^*(Y,y_0)\stac{\X}{\ra} H^*(X\X Y,\{x_0\}\X Y\cup X\X\{y_0\}).$$

$(X,x_0)$ is said to have the {\em homotopy extension property} (HEP for short), if $\{x_0\}$ is a strong deformation retract of one of its open neighborhoods in $X$.
First, we give the following conclusion.
\bt\label{th0.1}(\!\!\cite{LW25}) Let $(X,x_0)$ and $(Y,y_0)$ have the HEP. Assume that $H^k(Y,y_0)$ is free and finitely generated for each $k$.
Then the cross product homomorphism
$$H^*(X,x_0)\otimes H^*(Y,y_0)\stac{\X}{\ra} H^*(X\X Y,\{x_0\}\X Y\cup X\X\{y_0\})$$
is an isomorphism.\et


If $(X,x_0)$ and $(Y,y_0)$ have the HEP, we can easily see that $(X\X Y,\{x_0\}\X Y\cup X\X\{y_0\})$ has the HEP as well.
Similar to the case of CW complex in \cite{Hat}, then one deduces that the cross product for $(X,x_0)$ and $(Y,y_0)$ can give a {\em reduced cross product}
$$\~H^*(X)\otimes\~H^*(Y)\stac{\X}{\ra}\~H^*(X\wedge Y),$$
where $X\wedge Y:=(X,x_0)\wedge (Y,y_0)$.

Concerning this reduced cross product, by Theorem \ref{th0.1}, we deduce that if $\~H^*(X)$ or $\~H^*(Y)$ is free and finitely generated in each dimension, then the reduced cross product is an isomorphism.
\bl\label{l0.3}(\!\!\cite{LW25})
Suppose that the pointed spaces $(X_i,x_{i})$ and $(Y,y_0)$ have the HEP for $i=1,\,2$.
Assume that $\~H^*(X_1)\not\approx\~H^*(X_2)$ and that $\~H^*(Y)\neq0$.
Then if $\~H^*(X_i)$ and $\~H^*(Y)$ are free and finitely generated in each dimension, $i=1,\,2$, we have
\be\label{0.1}
\~H^*(X_1\wedge Y)\not\approx\~H^*(X_2\wedge Y).
\ee
\el
Assume $K$ is a compact isolated invariant set for a semiflow $\Phi$ and that $h(\Phi,K)$ denotes its Conley index.
Then applying $\~H^*$ to $h(\Phi,K)$, we can obtain the {\em reduced cohomology Conley index} of $K$.

Let $K_i$ be a compact isolated invariant set of the semiflow $\Phi_i$ on $X_i$ for $i=1,\,2$.
Then by the definition, one can obtain two Conley index pairs $(N_1,E_1)$ and $(N_2,E_2)$ of $K_1$ and $K_2$, respectively so that
$$\~H^*(h(\Phi_1,K_1))=\~H^*(N_1/E_1,[E_1]),\Hs \~H^*(h(\Phi_2,K_2))=\~H^*(N_2/E_2,[E_2]).$$
According to the property of index pairs and quotient flows (see \cite {WLD,WLD1}), we immediately conclude that $[E_i]$ is an attractor in $N_i/E_i$ for the quotient flow of $\Phi_i$, $i=1,\,2$.
Therefore $(N_i/E_i,[E_i])$ has the HEP and one can define the reduced cross product of reduced cohomology Conley indices in the following:
$$\~H^*(h(\Phi_1,K_1))\otimes\~H^*(h(\Phi_2,K_2)\stac{\X}{\ra}\~H^*(h(\Phi_1\X\Phi_2,K_1\X K_2)).$$

By virtue of Lemma \ref{l0.3}, one can obtain the following main result, which plays an important role in proving our conclusions.
\bl\label{le4.3}(\!\!\cite{LW25})
Let $K_i$ be a compact isolated invariant set of the semiflow $\Phi_i$ on $X_i$ for $i=1,\,2,\,3$.
Assume that $\~H^*(h(\Phi_i,K_i))$ is free and finitely generated for $i=1,\,2,\,3$, $\~H^*(h(\Phi_1,K_1))\not\approx\~H^*(h(\Phi_2,K_2))$ and that $\~H^*(h(\Phi_3,K_3))\neq0$.
Then we have
$$\~H^*(h(\Phi_1\X\Phi_3,K_1\X K_3))\not\approx\~H^*(h(\Phi_2\X\Phi_3,K_2\X K_3)).$$
\el
\Vs
\noindent{\bf Acknowledgments}
\Vs
This work is supported by the National Natural Science Foundation of China (Grant Nos. 12101462, 11801190).


\Vs
\noindent{\bf Declaration of competing interests}
\Vs
We have nothing to declare.

\medskip
\medskip

\end{document}